\documentclass{article}

\usepackage[english]{babel}

\usepackage[a4paper,top=2cm,bottom=2cm,left=3cm,right=3cm,marginparwidth=1.75cm]{geometry}
\usepackage{xparse}

\usepackage[colorlinks=true, allcolors=blue]{hyperref}

\usepackage{amsmath,mathtools,amsthm,a4wide}
\usepackage{amssymb, authblk}
\usepackage[mathscr]{eucal}
\usepackage{stackengine} 
\usepackage{bm,booktabs}
\usepackage{bbm}
\usepackage[shortlabels]{enumitem}
\usepackage[usenames,dvipsnames]{xcolor}
\usepackage{comment}
\usepackage{ulem}

\usepackage{tikz}
\usepackage{pgfplots}

\usepackage[labelformat=simple]{subcaption}
\usepackage{graphicx, caption}

\newtheorem{theorem}{Theorem}[section]
\newtheorem{lemma}[theorem]{Lemma}
\newtheorem{proposition}[theorem]{Proposition}

\newtheorem{assumption}{Assumption}

\newcommand{\prob}{\mathbb{P}}
\newcommand{\Prob}[1]{\prob\left(#1\right)}

\newcommand{\E}[1]{\mathbb{E}\left[ #1 \right]}

\newcommand{\plim}{\ensuremath{\stackrel{\prob}{\longrightarrow}}}

\newcommand{\dd}{{\; \rm d}}

\newcommand{\bv}{{\boldsymbol{v}}}

\title{Large deviations for subgraphs in inhomogeneous random graphs}
\author{Riccardo Michielan, Clara Stegehuis, Bert Zwart}
\date{\today}

\begin{document}

\maketitle

\begin{abstract}
Inhomogeneous random graphs are fundamental models for real-world networks, where prescribed degrees are imposed as soft constraints. A common assumption in such models is that the degree distribution follows a power-law, capturing the heavy-tailed nature observed in many contexts. While various graph functionals have been studied in this setting, inhomogeneity makes their analysis significantly more challenging. The goal of this paper is to investigate the large deviations of subgraph counts in inhomogeneous random graphs. Rare events concerning these functionals translate into quantifying the probability that extremely large hubs appear in the graph. This can be achieved by defining a specific optimization problem that captures the most likely way to generate numerous additional subgraphs. When the expected number of subgraphs is sublinear in the graph size, polynomially large deviations are possible, and in this case, we can derive sharp results on clique counts.
\end{abstract}

\section{Introduction}
Many real-world networks exhibit heavy-tailed degree distributions that can be modeled by power-laws, often with infinite second moment~\cite{vazquez2002}. To model such networks, random graph models with heavy-tailed degree distributions have become central benchmarks. These models are designed to replicate the degree of heterogeneity observed in empirical networks, but typically do not enforce other structural properties. As a result, understanding the behavior of various network statistics in such models has become a key area of research~\cite{dhara2019,friedrich2015a,heydari2017,janssen2019,hofstad2017,yin2019a}.
In this paper, we focus on the occurrence of cliques and, more generally, arbitrary fixed subgraphs. The count of such subgraphs provides insight into the network’s local density and so-called network motifs, which have been widely studied in various applications of network science~\cite{milo2002,alon2007,picard2008}. While many real-world networks contain many dense subgraphs, classical sparse random graph models tend to be locally tree-like and thus contain few such structures in the large-network limit. However, in power-law random graphs, the presence of high-degree vertices (hubs) can significantly increase the likelihood of observing dense subgraphs, even when the average degree remains finite~\cite{hofstad2017b,sonmez2024distances}. This raises a natural question: How unlikely is it for a sparse power-law random graph to contain an unusually large number of cliques or other fixed subgraphs?

Large deviation principles for subgraph counts have been extensively studied in Erdős–Rényi graphs. For example, the tail behavior of triangle counts has been characterized in both dense and sparse regimes~\cite{janson2004,janson2004a,kim2004,demarco2011,chakrabarty2021,chatterjee2012}, revealing that rare events with many triangles are typically driven by the emergence of localized dense regions. Other work studies the large deviation behavior of random walks and other stochastic processes on graphs ~\cite{Coghi2019, Altarelli2013}. These insights have been extended to general subgraphs in the dense setting, but the sparse case—especially under heavy-tailed degree distributions—remains far less understood.

Existing large deviation results for inhomogeneous random graphs often rely on the assumption of dense graphs~\cite{Markering2020LDP} or finite second moments~\cite{andreis2021,chakrabarty2021,dommers2018}, which allow for tractable analysis via factorized connection probabilities. In contrast, when the degree distribution has infinite variance, the connection probabilities exhibit nonlinear dependencies on vertex weights, introducing degree-degree correlations that complicate the analysis~\cite{stegehuis2017b,yao2017}. Some progress has been made in regimes where the average degree diverges~\cite{oliveira2019}, but these results do not apply to the sparse, power-law regime. To date, large deviation results in this setting are limited to specific functionals such as PageRank~\cite{Mariana2021}, edge counts~\cite{kerriou2022,stegehuis2022scale}, giant component \cite{jorritsma2024largedeviationsgiantcomponent}, and triangle counts~\cite{stegehuis2023}.

In this paper, we derive asymptotic estimates for the upper tail probabilities of clique counts and a class of general subgraph counts in sparse, inhomogeneous random graphs with power-law degree distributions that have finite mean and infinite second moment. For $k$-cliques, we identify the precise decay rate of the tail probability and show that the dominant contribution to the deviation arises from the presence of $k-2$ hubs with degrees significantly exceeding the typical maximum degree. For general subgraphs, these most likely structures may look different, and we characterize the subgraph structures and degree configurations that drive large deviations. This highlights the interplay between subgraph topology and the heavy-tailed nature of the degree distribution.

\paragraph{Notation.}
We denote $[k]=\{1,2,\dots,k\}$. We say that a sequence of events $(\mathcal{E}_n)_{n\geq 1}$ happens with high probability (w.h.p.) if $\lim_{n\to\infty}\Prob{\mathcal{E}_n}=1$ and we use $\plim $ for convergence in probability. We write $f(n)=o(g(n))$ if $\lim_{n\to\infty}f(n)/g(n)=0$, and $f(n)=O(g(n))$ if $|f(n)|/g(n)$ is uniformly bounded. We write $f(n)=\Theta(g(n))$ if $f(n)=O(g(n) )$ as well as $g(n)=O(f(n))$. We say that $X_n=O_{{\prob}}(g(n))$ for a sequence of random variables $(X_n)_{n\geq 1}$ if $|X_n|/g(n)$ is a tight sequence of random variables, and $X_n=o_{{\prob}}(g(n))$ if $X_n/g(n)\plim 0$.

\paragraph{Organization of the paper.} We first describe the version of the inhomogeneous random graph that we study in Section~\ref{sec:results}, and proceed to state our main results on clique counts and more general subgraph counts. We then provide a discussion in Section~\ref{sec:discussion}, and the proofs of our results for cliques are in Sections~\ref{sec:proofcliques}, and for general subgraphs in Section~\ref{sec:proofsubgraphs}. 

\section{Main results}\label{sec:results}
\subsection{Model}
We consider the rank-1 inhomogeneous random graph (IRG)~\cite{chung2002} with heavy-tailed weights. The IRG is constructed by assigning a positive weight $W_i$ to each vertex $i \in [n]$. These weights are assumed to be independent and identically distributed (i.i.d.) samples from a random variable $W$ with values on the interval $[1,\infty)$ and with probability density function $f_W(x) = \alpha x^{-\alpha-1}(1+o(1))$, where $\alpha \in (1,2)$. In particular, $W$ is heavy-tailed, and it satisfies
\begin{equation}\label{eq:Fx}
    \bar{F}(x) := \mathbb{P}(W_i > x) = x^{-\alpha}(1+o(1)) , \quad x \geq 1.
\end{equation}
Edges between vertices $i$ and $j$ are formed independently with probability
\begin{equation}
    p_{ij} = \Theta\left(\min\left(\frac{W_i W_j}{\mu n}, 1 \right)\right),
\end{equation}
where $\mu = \mathbb{E}[W_i]$ is the expected vertex weight. This connection rule ensures that the expected degree of vertex $i$ scales proportionally to its weight, i.e., $\mathbb{E}[\deg(i)] \approx W_i$~\cite{chung2002,stegehuis2017}, up to constant factors. This kernel includes the Chung-Lu model~\cite{chung2002}, and the generalized random graph~\cite{britton2006generating}.

Then, given a subset of $k$ vertices with weights $(h_1, \dots, h_k)$, the probability that they form a clique is asymptotically proportional to
\begin{equation}
    f_n(h_1, \dots, h_k) := \prod_{1 \leq i < j \leq k} \min\left(\frac{h_i h_j}{\mu n}, 1\right).
\end{equation}

\subsection{Large Deviations for Clique Counts}

The number of $k$-cliques in IRG, denoted by $\mathcal{K}_n^{(k)}$, is a random variable influenced by two sources of randomness: the vertex weight assignment and the random edge formation. Understanding the typical and atypical behavior of $\mathcal{K}_n^{(k)}$ is crucial for characterizing the higher-order connectivity structure of the graph.

We begin by recalling a result from \cite{vdhofstad21,stegehuis2019} that estimates the expected number of $k$-cliques, denoted by $m_n^{(k)} := \mathbb{E}[\mathcal{K}_n^{(k)}]$.

\begin{lemma}\label{lem:averagecliquenumber}
Let $\alpha \in (1,2)$. Then
\[
m_n^{(k)} = H \left(n \bar{F}(\sqrt{n})\right)^k (1 + o(1)),
\]
for some constant $H > 0$. In particular, $m_n^{(k)}$ is regularly varying with index $k(2-\alpha)/2$.
\end{lemma}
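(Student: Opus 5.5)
By exchangeability, I would start from the exact identity
\[
m_n^{(k)} = \binom{n}{k}\,\mathbb{E}\!\left[\prod_{1\le i<j\le k} p_{ij}\right] = \Theta\!\left(n^k \,\mathbb{E}\big[f_n(W_1,\dots,W_k)\big]\right),
\]
which holds because, given the weights, edges are independent and the clique probability is $\prod p_{ij}$. The task then reduces to computing the $k$-fold integral $\int_{[1,\infty)^k} f_n(h_1,\dots,h_k)\prod_i f_W(h_i)\,\mathrm{d}h_i$.

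\textbf{Rescaling.} I substitute $h_i=\sqrt{\mu n}\,x_i$. Then $\min(h_ih_j/(\mu n),1)=\min(x_ix_j,1)$, and $f_W(\sqrt{\mu n}x)\,\sqrt{\mu n}\,\mathrm{d}x \approx \alpha (\mu n)^{-\alpha/2} x^{-\alpha-1}\,\mathrm{d}x$. This gives
\[
n^k\,\mathbb{E}[f_n] \approx \left(n(\mu n)^{-\alpha/2}\right)^k \int_{(1/\sqrt{\mu n},\infty)^k} \prod_{i<j}\min(x_ix_j,1)\prod_i \alpha x_i^{-\alpha-1}\,\mathrm{d}x_i .
\]
Since $n\bar F(\sqrt n)= n^{1-\alpha/2}(1+o(1))$, the prefactor is $\Theta\big((n\bar F(\sqrt n))^k\big)$. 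It remains to show two things: the integral over $(0,\infty)^k$ converges to a finite positive constant, and dominated convergence (via Potter-type bounds on the $(1+o(1))$ in $f_W$) allows passing to the limit. The limiting constant, together with $\mu^{-k\alpha/2}$ and the constants hidden in $\Theta$ of $p_{ij}$, gives $H$. Strictly, $H$ is asymptotically defined only if $p_{ij}$ equals a fixed function times $\min(\cdot,1)$; otherwise one gets $\Theta$, which is how I would read the lemma. Regular variation with index $k(2-\alpha)/2$ then follows because $n\bar F(\sqrt n)$ is regularly varying with index $1-\alpha/2$.

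\textbf{Finiteness of the limiting integral (main obstacle).} Integrability at infinity is easy: if $x_1$ is the largest coordinate, bound each factor involving $x_1$ by $1$, and $x_1^{-\alpha-1}$ is integrable at $\infty$ since $\alpha>0$. Near $0$ the question is whether the edge factors supply enough powers. If all $x_i\le 1$, the integrand is $\prod_i x_i^{(k-1)-\alpha-1}$, and $k-1-\alpha>-1$ holds for $k\ge 2$ because $\alpha<2\le k$. The general case is handled as follows.

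\begin{enumerate}[(i)]
\item Partition the domain by the ordering of the coordinates and by which pairs satisfy $x_ix_j\ge1$.
\item On each region, bound $\min(x_ix_j,1)\le (x_ix_j)^{\beta}$ for a suitable $\beta\in[0,1]$. The aim is to make every marginal exponent strictly greater than $\alpha$ near $0$ and strictly less than $\alpha$ near $\infty$.
\item A convenient choice is $\min(x_ix_j,1)\le (x_ix_j)^{1/2}$ for pairs with $x_ix_j\le1$. Alternatively, use the known optimization from \cite{vdhofstad21}: the exponent $\sum_i(\alpha_i\ldots)$ is maximized at $x_i=\Theta(1)$, that is, at weights of order $\sqrt n$, for cliques when $\alpha\in(1,2)$.
\end{enumerate}

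I would attempt a clean route by induction on $k$. Integrating out the smallest coordinate $x_k$ gives $\int_0^\infty x^{-\alpha-1}\prod_{i<k}\min(xx_i,1)\,\mathrm{d}x$. This integral is finite, since it behaves like $x^{k-2-\alpha}$ near $0$ (requiring $k\ge 3$; the case $k=2$ is checked directly, where the integrand near $0$ is $\min(x_1x_2,1)$ with exponent $-\alpha$, which is integrable because $\alpha<2$), and it is bounded by a power of $\min_i x_i$ times a power of $\max_i x_i$. This recursive bound is the step I expect to require the most care.

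Positivity of $H$ is immediate, since the integrand is positive on $[1,2]^k$.
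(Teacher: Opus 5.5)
\paragraph{Gap: finiteness of the limiting integral.} Your reduction (exchangeability, the rescaling $h_i=\sqrt{\mu n}\,x_i$, dominated convergence) is the natural route; the paper gives no proof of its own and quotes the lemma from \cite{vdhofstad21,stegehuis2019}. The gap is the step that carries all the content. You never establish
\[
I_k:=\int_{(0,\infty)^k}\prod_{1\le i<j\le k}\min(x_ix_j,1)\prod_{i=1}^k x_i^{-\alpha-1}\,\mathrm{d}x_i<\infty ,
\]
and the partial arguments you give for it fail where it matters.

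(a) $I_2=\infty$. For $x_1<1/x_2$ the integrand is $x_1^{-\alpha}x_2^{-\alpha}$, and $x^{-\alpha}$ is not integrable at $0$ when $\alpha>1$. This matches the edge count being $\Theta(n)\gg n^{2-\alpha}$. So the lemma is false for $k=2$, it must be read for $k\ge3$, and the base case of your induction is wrong. Likewise, on $(0,1]^k$ the integrand is $\prod_i x_i^{k-2-\alpha}$, so the correct condition is $k-2-\alpha>-1$ (i.e.\ $k\ge3$), not $k-1-\alpha>-1$.

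(b) Bounding every factor that contains the largest coordinate by $1$ leaves a $(k-1)$-clique integrand in the remaining coordinates. For $k=3$ this is the integrand of $I_2$, which is not integrable near $0$. So your ``easy'' treatment of infinity fails precisely in the mixed regime: one weight $\gg\sqrt n$, the others $\ll\sqrt n$.

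(c) Your ``convenient choice'' $(x_ix_j)^{1/2}$ already destroys integrability on $(0,1]^k$ whenever $k\le 2\alpha+1$, which is always the case for $k=3$. The ``integrate out the smallest coordinate'' recursion does not return an integrand of the same form, so it is not an induction as stated.

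\paragraph{How to close it.} One way that handles all $k\ge3$ at once: substitute $x_i=e^{t_i}$, so that $I_k=\int_{\mathbb{R}^k}e^{\phi(t)}\,\mathrm{d}t$ with
\[
\phi(t)=-\alpha\sum_{i}t_i+\sum_{i<j}\min(t_i+t_j,0).
\]
Since $\phi$ is continuous and positively $1$-homogeneous, it suffices to show $\phi(t)<0$ for $t\neq0$. Then $\phi(t)\le-c|t|$ for some $c>0$, and $I_k\le\int_{\mathbb{R}^k}e^{-c|t|}\,\mathrm{d}t<\infty$.

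To prove this, let $S=\sum_i t_i$. Each summand satisfies $\min(t_i+t_j,0)\le0$ and $\min(t_i+t_j,0)\le t_i+t_j$, so $\sum_{i<j}\min(t_i+t_j,0)\le\min\{0,(k-1)S\}$. There are three cases.
\begin{itemize}
\item If $S>0$, then $\phi\le-\alpha S<0$.
\item If $S<0$, then $\phi\le(k-1-\alpha)S<0$, because $k-1-\alpha>0$ for $k\ge3$.
\item If $S=0$ but $t\neq0$, take $t_{i_0}=\min_i t_i<0$. Were $t_{i_0}+t_j\ge0$ for all $j\neq i_0$, we would get $S\ge(k-2)|t_{i_0}|>0$, a contradiction. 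Hence some summand is strictly negative, and $\phi(t)<0$.
\end{itemize}
At $\beta_i=\tfrac12+t_i$ the exponent $\sum_i(1-\alpha\beta_i)+\sum_{i<j}\min(\beta_i+\beta_j-1,0)$ equals $k(1-\alpha/2)+\phi(t)$. So this is exactly the uniqueness of the optimiser $\beta\equiv\tfrac12$ that you alluded to.

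Alternatively, settle $k=3$ this way and then salvage your bound from (b) as an induction for $k\ge4$:
\[
I_k\le\int_{(0,R]^k}\prod_i x_i^{k-2-\alpha}\,\mathrm{d}x+k\,\alpha^{-1}R^{-\alpha}I_{k-1}.
\]
Once $I_k<\infty$, a bound $f_W(h)\le Ch^{-\alpha-1}$ on $[1,\infty)$ gives an integrable dominating function, and the rest of your argument goes through.
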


This result shows that the expected number of cliques grows polynomially with $n$. Furthermore, it can be shown that when $\alpha\in(1,2)$, most cliques are formed around a small number of vertices with weight approximately $\sqrt{n}$.

The central goal of this section is to establish a large deviation principle for $\mathcal{K}_n^{(k)}$. Specifically, for any $a > 0$, we aim to quantify the probability of observing significantly more cliques than expected,
\[
\mathbb{P}(\mathcal{K}_n^{(k)} > (1+a) m_n^{(k)}).
\]

To analyze this probability, we will study the most likely mechanism for generating an excess number of cliques. We find that the dominant contribution to large deviations in clique counts comes from the presence of $k-2$ exceptionally large hubs, around which these cliques form. The weights of these hubs depend intricately on the desired deviation. Nonetheless, one can understand what is the smallest size required from these $k-2$ hubs to generate an expected excess of $a m_n^{(k)}$ cliques, defining
\begin{equation}\label{eq:smallest_hub_size}
    c_a(n) := \inf\left\{ c : \binom{n}{2} \int_0^\infty \int_0^\infty f_n(x,y,c,...,c) \dd F(x) \dd F(y)  > a m_n^{(k)} \right\}.
\end{equation}

The following lemma characterizes the asymptotic scaling of $c_a(n)$:

\begin{lemma}\label{lem:ca(n)}
Let $c_a(n)$ be as in~\eqref{eq:smallest_hub_size}. 
Then,
\[
c_a(n) = K_2 a^{\frac{1}{2(\alpha-1)}} n^{\alpha^*}(1+o(1)),
\]
where $\alpha^* = 1 - \frac{2 - k(2-\alpha)}{4(\alpha-1)}$, for some positive constant $K_2$.
\end{lemma}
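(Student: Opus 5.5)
The plan is to compute the left-hand side of \eqref{eq:smallest_hub_size} explicitly as a function of $c$ and then invert it. Write
\[
G_n(c):=\binom{n}{2}\int_0^\infty\int_0^\infty f_n(x,y,c,\dots,c)\dd F(x)\dd F(y).
\]
$G_n$ is continuous and nondecreasing in $c$, since every factor $\min(\cdot,1)$ is. Hence $c_a(n)$ is the smallest solution of $G_n(c)=a m_n^{(k)}$, and it suffices to find the asymptotics of $G_n(c)$ on the relevant range of $c$.

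\textbf{Step 1: reduction to a product form.} I will restrict to the regime $c\gg\sqrt n$, and check a posteriori that the solution lies there. This amounts to $\alpha^*>1/2$; I expect this to be the range in which the lemma is meant, and outside it I would first compare with $G_n(\sqrt n)$. In this regime all hub--hub factors $\min(c^2/\mu n,1)$ equal $1$. The integrand then becomes
\[
\min\Bigl(\tfrac{xy}{\mu n},1\Bigr)\,g(x)\,g(y),\qquad g(x):=\min\Bigl(\tfrac{xc}{\mu n},1\Bigr)^{k-2}.
\]
Replacing $\min(xy/\mu n,1)$ by $xy/\mu n$ gives an upper bound that factorizes:
\[
G_n(c)\le \frac{n^2}{2\mu n}\Bigl(\int x\,g(x)\dd F(x)\Bigr)^2 .
\]

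\textbf{Step 2: the one-dimensional integral.} I substitute $x=(\mu n/c)u$ and use $f_W(x)=\alpha x^{-\alpha-1}(1+o(1))$. This yields
\[
\int x\,g(x)\dd F(x)=(n/c)^{1-\alpha}\,\mu^{1-\alpha}\alpha\int_0^\infty u^{-\alpha}\min(u,1)^{k-2}\dd u\,(1+o(1)).
\]
The last integral is finite because $k-2-\alpha>-1$ (as $k\ge3$) and $\alpha>1$. The lower cut-off at weight $1$ is harmless, since $n/c\to\infty$ and the integrand near $0$ is integrable; dominated convergence gives the $1+o(1)$. Therefore
\[
G_n(c)\le K\,n^{3-2\alpha}c^{2\alpha-2}(1+o(1)).
\]

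\textbf{Step 3: matching lower bound (main obstacle).} The difficulty is showing that the truncation $\min(xy/\mu n,1)$ does not change the leading order. The mass of the product integral sits at $x,y\asymp n/c$, where $xy/n\asymp n/c^2\to0$, so on the box $\{x,y\le M n/c\}$ the truncation is inactive. The contribution from outside this box is at most $\varepsilon(M)$ times the main term, with $\varepsilon(M)\to0$ as $M\to\infty$, because the tails $\int_{Mn/c}^\infty x^{-\alpha}\dd x$ and $\int_0^{n/(Mc)}$ of the scaled integral are small. Letting $n\to\infty$ and then $M\to\infty$ gives
\[
G_n(c)=K\,n^{3-2\alpha}c^{2\alpha-2}(1+o(1)),
\]
uniformly for $c$ in windows $c=\Theta(n^{\alpha^*})$.

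\textbf{Step 4: inversion.} By Lemma~\ref{lem:averagecliquenumber} and \eqref{eq:Fx}, $m_n^{(k)}=H n^{k(2-\alpha)/2}(1+o(1))$. Solving
\[
K n^{3-2\alpha}c^{2(\alpha-1)}=aHn^{k(2-\alpha)/2}
\]
gives
\[
c=\Bigl(\tfrac{aH}{K}\Bigr)^{\frac1{2(\alpha-1)}}\,n^{\frac{k(2-\alpha)/2+2\alpha-3}{2(\alpha-1)}}.
\]
The exponent equals $1-\frac{2-k(2-\alpha)}{4(\alpha-1)}=\alpha^*$, and the prefactor fixes $K_2$. Monotonicity of $G_n$, together with the uniform $1+o(1)$ on these windows, turns the approximate root into $c_a(n)=K_2a^{1/(2(\alpha-1))}n^{\alpha^*}(1+o(1))$. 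Concretely, $G_n\bigl((1\pm\delta)c\bigr)$ lies strictly above or below $a m_n^{(k)}$ for large $n$.
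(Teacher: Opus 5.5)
Your proposal is correct and is essentially the paper's argument. The paper gives no separate proof of this lemma; it rests on the computation in Lemma~\ref{lem:contribution_hubs=k-2} with all $k-2$ hubs of weight $c$. There the hub--hub factors equal one, the truncation $\min(xy/\mu n,1)$ is shown to be negligible, and a factorized one-dimensional integral gives $\binom{n}{2}S_{\mathcal{B}}^{(k-2)}(n)=\tfrac{K_1}{2}\,n\,(c/n)^{2(\alpha-1)}(1+o(1))$, which is then inverted against $m_n^{(k)}$ exactly as in your Step~4.

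One small remark: your a posteriori condition $\alpha^*>1/2$ holds automatically for every $k\ge 3$, since it is equivalent to $(k-2)(2-\alpha)>0$. By contrast, the condition $c/n\to 0$ that you use in Step~2 is exactly $\alpha^*<1$, i.e.\ the standing assumption $\alpha>2-2/k$.
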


We now present the main result of this section, which provides matching asymptotic bounds for the order of magnitude of the large deviation probability for clique counts.

\begin{theorem}\label{thm:LDcliques}
Let $\alpha > 2 - 2/k$ and $a > 0$. Then, as $n \to \infty$,
\[
\mathbb{P}(\mathcal{K}_n^{(k)} > (1+a) m_n^{(k)}) = \Theta \left(n \mathbb{P}(W > c_a(n))\right)^{k-2}.
\]
\end{theorem}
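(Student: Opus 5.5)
The proof has two halves: a lower bound obtained by constructing $k-2$ large hubs, and an upper bound showing that no cheaper mechanism exists.

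\textbf{Lower bound.} We condition on the event $\mathcal{H}_n$ that there exist $k-2$ distinct vertices with weight at least $(1+\varepsilon)c_a(n)$. Since $c_a(n)$ is polynomially larger than $n^{1/\alpha}$ by Lemma~\ref{lem:ca(n)} and the assumption $\alpha>2-2/k$, we have $n\bar F(c_a(n))\to 0$. Hence
\[
\Prob{\mathcal{H}_n}=\Theta\left(\left(n\bar F(c_a(n))\right)^{k-2}\right),
\]
using $\bar F((1+\varepsilon)x)=\Theta(\bar F(x))$ by regular variation. On $\mathcal{H}_n$ the total clique count splits into two parts:
\begin{itemize}[label={}]
\item the cliques that contain all $k-2$ hubs, together with two ordinary vertices;
\item the remaining cliques.
\end{itemize}
The conditional expectation of the first part is
\[
\binom{n}{2}\iint f_n(x,y,c,\dots,c)\dd F(x)\dd F(y)\quad\text{with } c\ge(1+\varepsilon)c_a(n).
\]
By monotonicity and the regular-variation form of Lemma~\ref{lem:ca(n)}, this exceeds $(a+\delta)m_n^{(k)}$ for some $\delta>0$. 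The remaining cliques have conditional expectation $m_n^{(k)}(1+o(1))$, since a bounded number of planted hubs changes the rest only negligibly. It then suffices to prove concentration, namely that the clique count divided by its conditional mean tends to $1$ in probability on $\mathcal{H}_n$. I would get this from a conditional second-moment (Chebyshev) computation. The variance of the count of cliques through the hubs comes from pairs of such cliques sharing one of the ordinary vertices. The calculation should show that this variance is $o\big((m_n^{(k)})^2\big)$ because $m_n^{(k)}\to\infty$ polynomially. Taking $\varepsilon\to0$ only affects constants, which gives the lower bound $\Omega\left((n\bar F(c_a(n)))^{k-2}\right)$.

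\textbf{Upper bound.} I would decompose according to the number $j$ of vertices whose weight exceeds a truncation level $\eta c_a(n)$, for small $\eta>0$.
\begin{itemize}[label={}]
\item \emph{Case $j\ge k-2$.} The probability is already $O\left((n\bar F(c_a(n)))^{k-2}\right)$, using $\bar F(\eta x)\sim\eta^{-\alpha}\bar F(x)$.
\item \emph{Case $j<k-2$.} Each clique must use at least two vertices of weight below $\eta c_a(n)$. Here one must show that generating $a m_n^{(k)}$ extra cliques is much less likely than $(n\bar F(c_a(n)))^{k-2}$.
\end{itemize}

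For the case $j<k-2$ I would proceed in three steps.
\begin{enumerate}[label=(\roman*)]
\item Bound the conditional expected number of cliques given the weights by an optimization over hub configurations. With $j$ hubs of weights $c_1,\dots,c_j$, the expected excess number of cliques is at most a constant times $\binom{n}{k-j}$ times an integral of $f_n$ over the free weights. Using $\min(xy/n,1)$, this scales like a power of $\prod_i c_i$ times $n$-factors.
\item Show that with fewer than $k-2$ hubs, reaching excess $a m_n^{(k)}$ requires hub weights $c_i$ so large that $\prod_i n\bar F(c_i)$ is $o\left((n\bar F(c_a(n)))^{k-2}\right)$. This is where $\alpha>2-2/k$ enters, making the exponent comparison strict. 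The comparison amounts to a small linear program in the exponents $\log_n c_i$, whose optimum is attained at $k-2$ equal hubs of size $c_a(n)$.
\item Control the deviation of the clique count from its conditional mean by Markov's inequality on a high moment of the centred count, or by a truncation plus a Janson-type inequality. The probability that the edge randomness alone produces a polynomial excess beyond the weight-driven mean should decay faster than any polynomial, or at least faster than the target rate.
\end{enumerate}

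The weights below $\eta c_a(n)$ but above $\sqrt n$ also require care, as intermediate hubs. I would handle them by a union bound over dyadic weight classes. Since the number of such vertices concentrates, their contribution to the clique count is $m_n^{(k)}(1+o(1))$ plus terms that can only be large on events whose probability is again bounded by the linear-programming estimate in step (ii).

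\textbf{Main obstacle.} The hardest step is the $j<k-2$ case. It requires showing both that every alternative configuration (fewer, larger hubs, or many moderately large vertices) has a strictly smaller exponent, and that fluctuations of the edges, rather than of the weights, cannot produce the deviation at polynomial cost. I would first try the exponent optimization together with a high-moment bound conditional on truncated weights.
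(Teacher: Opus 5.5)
Your lower bound is essentially the paper's: plant $k-2$ hubs, get the mean number of hub cliques from Lemma~\ref{lem:contribution_hubs=k-2} and the definition of $c_a(n)$, then use a second-moment argument. Your use of $(1+\varepsilon)c_a(n)$ to get a strict margin, and a variance computation that includes the edge randomness, are if anything cleaner than what the paper writes.

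The gap is in the upper bound for $j<k-2$. You plan to compare configurations through a linear program in the exponents $\log_n c_i$ and a union bound over dyadic weight classes, aiming to show that every alternative configuration has a strictly smaller exponent. That aim is false. Moreover, exponent-level bookkeeping only yields $\lim \log\mathbb{P}(\cdot)/\log n$, not a $\Theta$ bound; compare the proof of Theorem~\ref{thm:subgraphdeviation}, which loses powers of $\log n$. For example, take $k-3$ vertices above $\eta c_a(n)$ plus one vertex of weight about $c_a(n)/\log n$. This has exactly the optimal exponent, yet its probability is of order $(\log n)^{\alpha}(n\bar F(c_a(n)))^{k-2}$, which exceeds the target. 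Its probability therefore cannot be bounded away; it must be excluded because it cannot create the excess, since it yields only $O((\log n)^{-2(\alpha-1)}a\,m_n^{(k)})$ extra cliques.

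Your scaling heuristic in (i)--(ii) is also wrong. With at most $k-3$ heavy vertices the excess is $o(m_n^{(k)})$ no matter how heavy they are (Lemma~\ref{lem:contribution_hubs<k-2}), because such cliques need a clique on at least three light vertices. With exactly $k-2$ heavy vertices the excess is governed by the \emph{smallest} of them, not by a power of $\prod_i c_i$ (Lemma~\ref{lem:contribution_hubs=k-2}).

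What is missing is the deterministic bound of Proposition~\ref{prop:no_hubs}. Work on the envelope event $E_n(A,c,\delta)$. It has probability $1-o(n^{-\beta})$, keeps the empirical tail within a factor $1+A$ of $\bar F$ below $a(n)$, and caps the number of vertices above $b(n)$ by a constant. That number does not concentrate, as your sketch assumes; it is typically zero. On this event, suppose fewer than $k-2$ weights exceed $\eta c_a(n)$. Then:
\begin{itemize}
\item every $(k-2)$-tuple of heavy vertices contains one below $\eta c_a(n)$, so it contributes at most a constant times $\eta^{2(\alpha-1)}a\,m_n^{(k)}$;
\item $(k-1)$-tuples contribute $O(n(\eta c_a(n)/n)^{\alpha})=o(m_n^{(k)})$, which is where $\alpha>2-2/k$ is used;
\item everything else is at most $(1+A)^k m_n^{(k)}(1+o(1))$.
\end{itemize}
Choosing $A$ and $\eta$ small, depending on $a$ and on the cap $c$, gives $C_n<(1+a)m_n^{(k)}$ outright. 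These configurations therefore cost $o(n^{-\beta})$, and no sum of their probabilities is ever taken. The paper does this for $j=0$ and handles $1\le j<k-2$ by induction on $k$ (Proposition~\ref{prop:not_enough_hubs}). The same envelope bound also covers those $j$ directly, since the $j$ big hubs fill fewer than $k-2$ slots.

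Finally, in step (iii), Janson's inequality controls lower tails, not upper tails. Any moment or deletion-type argument for the upper tail needs the input of Lemma~\ref{lem:kcliqueshare}: an edge with $w_iw_j<\mu n$ lies in at most $n^{(2-\alpha)(k-2)/2+\delta}$ $k$-cliques, except with probability $e^{-n^{\varepsilon}}$. This is what makes the Chatterjee-type bound of Lemma~\ref{lem-conditionedchatterjeetriangles} work, and your sketch does not identify it.
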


Theorem~\ref{thm:LDcliques} shows that the probability of observing a large deviation in the number of $k$-cliques is asymptotically equivalent in order of magnitude to the probability of having $k-2$ hubs of size $c_a(n)$. This highlights the role of extreme vertex weights in the tail behavior of $\mathcal{K}_n^{(k)}$. The theorem only holds for $\alpha$ sufficiently large, larger than $2-2/k$. We believe that below this, large deviations are caused by a polynomially growing number of hubs, leading to exponentially decaying deviation probabilities, as our optimization-based approach shows that a finite number of hubs will, in expectation, not yield the correct number of $k$-cliques. Indeed, when $\alpha=2-2/k$, then $\alpha^*=1$. This means that the weight of the added hub(s) is linear. Increasing the weight further, however, does not increase the degree of the node anymore, because of the minimum in the connection probability~\eqref{eq:Fx}. Thus, from this point onward, increasing the hub weight does not add any more copies of $K_k$, and more hubs are necessary to create additional copies.

When choosing $a = a(n)$ growing with $n$, we can still apply Theorem \ref{thm:LDcliques} to compute the probability of larger deviations, such as polynomial deviations. In particular, consider deviations of order $n^\gamma \gg m_n^{(k)}$, for $\gamma > 0$ sufficiently large. Lemma \ref{lem:averagecliquenumber} yields $n^\gamma = a m_n^{(k)}(1+o(1))$, with $a = n^{\gamma - k(2-\alpha)/2}(1+o(1))$. Then, Lemma \ref{lem:ca(n)} gives
$$c_a(n) = \Theta\left(a^{\frac{1}{2(\alpha-1)}}n^{1-\frac{2 - k(2-\alpha)}{4(\alpha -1)}}\right) = \Theta\left(n^{1 + \frac{\gamma -1}{2(\alpha-1)}}\right),$$
from which we can obtain the probability of deviations using Theorem \ref{thm:LDcliques}. Interestingly, the scale at which $c_a(n)$ grows in $n$ is independent of $k$, meaning that the required size of hubs necessary to achieve prescribed polynomial clique deviation is independent of the clique size.

\subsection{Deviations for subgraph counts}
Next, we investigate deviations of other subgraph counts than cliques. 
Given a small graph $H$ on $k$ vertices, we denote by $N(H)$ the number of times that $H$ occurs as a subgraph in the IRG. 
Furthermore, we denote the conditional expectation of $N(H)$ given the weights of all $n$ vertices $W_1, W_2,..., W_n$,
\begin{equation}
    C^{(n)}(H) = \mathbb{E}\left[N(H) \;|\; \{W_i\}_{i \in [n]}\right].
\end{equation}
Our goal is to calculate $$ \Prob{C^{(n)}(H) > n^{\gamma}},$$ for some $\gamma >0$. 

We will focus on a specific class of subgraphs:
\begin{assumption}\label{ass:feasible_solution}
Consider the optimization problem 
\begin{align}\label{eq:opt_problem_expectation}
    B(H) := \max_{\boldsymbol{\beta} \in [0,1]^k} \quad  &  \sum_{i \in [k]} (1 -  \beta_i) + \sum_{(i,j) \in H} \min(\beta_i + \beta_j - 1, 0).
\end{align}
The optimal solution of \eqref{eq:opt_problem_expectation} for graph $H$ satisfies $\boldsymbol{\beta}^*\in [0,1/\alpha)^k$. 
\end{assumption}
Informally, the objective function in Equation \eqref{eq:opt_problem_expectation} encodes the polynomial scaling (with respect to $n$) of the expected number of copies of $H$ formed on vertices with weights proportional to $(\beta_1,...,\beta_k)$; thus, $B(H)$ identifies the expected subgraph count in the IRG, and the optimal $\boldsymbol{\beta}$ suggest which weight configuration yields maximum contribution. 
{Furthermore, by~\cite[Lemma 4.2]{vdhofstad21}, this assumption implies that the optimal solution is unique.} This set of graphs is quite large, and contains, for example, all cliques and Hamiltonian graphs with an odd number of nodes.

Assumption \ref{ass:feasible_solution} ensures that the random variable $N(H)$ concentrates around its mean. If Assumption~\ref{ass:feasible_solution} does not hold, then, with high probability, $N(H)$ has a smaller order of magnitude than its expectation, and therefore the probability of deviation from its mean is considerably large. Furthermore Assumption~\ref{ass:feasible_solution} ensures that
\begin{equation}\label{eq:convergencesubgraphs}
    \frac{N(H)}{n^{k(2-\alpha)/2}}\plim K_H
\end{equation}
for some $0<K_H<\infty$~\cite[Theorem 2.2]{vdhofstad21}. We now define
\begin{align}
R(H):=\max_{\boldsymbol{\beta}\in[0,1]^k} & \sum_{i \in [k]} \min \left(1 - \alpha\beta_{i},0\right) \label{eq:opt_poly_obj} \\
\text { s.t. } & \sum_{i \in V_{H}} \max \left(1 - \alpha\beta_{i},0\right)+\sum_{\{i, j\} \in E_{H}} \min \left(\beta_{i}+\beta_{j}-1,0\right) \geq \gamma .\label{eq:opt_poly_constr}
\end{align}
The following theorem shows that $R(H)$ characterizes the rate of the deviations of $N(H)$:
\begin{theorem}\label{thm:subgraphdeviation}
    Let $\gamma>k(2-\alpha)/2$,  and let $H$ satisfy Assumption~\ref{ass:feasible_solution}. When~\eqref{eq:opt_poly_constr} is feasible, then
    \begin{equation}
        R(H)= \lim_{n\to\infty}\frac{\log(\Prob{C^{(n)}(H) > n^{\gamma}})}{\log(n)}.
    \end{equation}
\end{theorem}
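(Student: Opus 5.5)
We prove the two matching bounds separately, working with the weights on a polynomial scale: write $W_i=n^{\beta_i}$ and partition $[1,n]$ (and, where needed, $(n,\infty)$ truncated at $n^{M}$ for a large constant $M$) into finitely many intervals of the form $[n^{b},n^{b+\varepsilon})$. Two heuristics drive the argument. First, the number of vertices with weight of order $n^{\beta}$ is about $n^{1-\alpha\beta}$. This is polynomially large when $\beta<1/\alpha$. When $\beta>1/\alpha$, the probability that such vertices exist at all is about $n^{1-\alpha\beta}$ per required vertex. Second, $C^{(n)}(H)$ is a sum over $k$-tuples of vertices of $\prod_{(i,j)\in H}\min(W_iW_j/(\mu n),1)$, up to constants. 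On a weight configuration $\boldsymbol{\beta}$, the contribution is therefore about
\[
n^{\sum_i \max(1-\alpha\beta_i,0)+\sum_{(i,j)\in H}\min(\beta_i+\beta_j-1,0)},
\]
provided the vertices with $\beta_i>1/\alpha$ are present. These two facts explain the objective~\eqref{eq:opt_poly_obj} and the constraint~\eqref{eq:opt_poly_constr}.

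\textbf{Lower bound.} Take an optimizer $\boldsymbol{\beta}^*$ of $R(H)$. Perturb it slightly so that the constraint holds strictly with some slack $\delta$; this is possible by continuity and is needed because the feasible region may have empty interior in some directions. Let $S=\{i:\beta_i^*>1/\alpha\}$. Consider the event that for each $i\in S$ there is a distinct vertex with $W\in[n^{\beta_i^*},2n^{\beta_i^*}]$. The vertices are i.i.d., so this event has probability $\Theta(\prod_{i\in S} n\bar F(n^{\beta_i^*}))=n^{\sum_i\min(1-\alpha\beta_i^*,0)+o(1)}$. On this event, for $i\notin S$, the number of vertices with weight in $[n^{\beta_i^*},2n^{\beta_i^*}]$ is at least $\tfrac12 n^{1-\alpha\beta_i^*}$ w.h.p., by Chernoff, and independently of the finitely many hubs. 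Summing $C^{(n)}(H)$ only over tuples that use the hubs for $S$ and these vertex classes for the other coordinates gives $C^{(n)}(H)\ge n^{\gamma+\delta-o(1)}>n^{\gamma}$. Letting $\delta\to0$ yields the $\liminf$ bound.

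\textbf{Upper bound.} Fix $\varepsilon>0$ and use a union bound over the finitely many exponent boxes. Define the good event $\mathcal{G}$ on which, simultaneously for all grid levels $b\le 1/\alpha-\varepsilon$, the number of vertices with $W\ge n^{b}$ is at most $n^{1-\alpha b+\varepsilon}$; Chernoff bounds make $\mathbb{P}(\mathcal{G}^c)$ superpolynomially small. Also, $\max_i W_i\le n^M$ fails with probability $n^{1-\alpha M}$, which is negligible for large $M$. On $\mathcal{G}$, split $C^{(n)}(H)$ by the box of each coordinate. A box $\boldsymbol{b}$ contributes at most
\[
n^{\sum_i\max(1-\alpha b_i,0)+\sum_{H}\min(b_i+b_j-1,0)+O(\varepsilon)}\cdot N_{\boldsymbol b},
\]
where $N_{\boldsymbol b}$ bounds the number of vertices at levels above $1/\alpha$. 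There are finitely many boxes, so $C^{(n)}(H)>n^{\gamma}$ forces some box to contribute $n^{\gamma-o(1)}$.

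The main obstacle is that the vertices above $1/\alpha$ need not number $O(1)$. A priori there could be $m$ vertices at level $b>1/\alpha$, with probability about $n^{m(1-\alpha b)}$, and each such vertex contributes a multiplicative count. I handle this as follows. If the hub at coordinate $i$ has multiplicity $m_i$, the count is multiplied by $m_i$, which adds $\log m_i/\log n$ to the exponent, while the probability is multiplied by roughly $n^{(m_i-1)(1-\alpha b_i)}$. Since $1-\alpha b_i<-\varepsilon\alpha$, using more hubs is never cheaper than raising a single hub's exponent. For $m_i\le n^{\eta}$ the multiplicative gain is at most $n^{\eta}$. This gain can be absorbed by moving to a neighboring feasible point, which costs only $o(1)$ in the rate, because $R$ is continuous in $\gamma$ as the value of a parametric LP-type problem with piecewise linear concave data. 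Multiplicities above $n^{\eta}$ cost a super-constant power of $n$ and are discarded.

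Hence each box requiring a deviation has probability at most $n^{\sum_i\min(1-\alpha b_i,0)+O(\varepsilon)}$ with $\boldsymbol b$ satisfying~\eqref{eq:opt_poly_constr} at level $\gamma-O(\varepsilon)$. Boxes whose coordinates all lie below $1/\alpha$ cannot reach $n^{\gamma}$. Indeed, Assumption~\ref{ass:feasible_solution} identifies their maximum exponent with $B(H)=k(2-\alpha)/2<\gamma$, by~\eqref{eq:convergencesubgraphs}. Taking $\varepsilon\to0$ and using continuity of $R$ in $\gamma$ gives the $\limsup$ bound.
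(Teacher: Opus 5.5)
Your proposal is correct and follows the same route as the paper. The lower bound forces the optimal hub configuration $\boldsymbol{\beta}^*$ and uses binomial concentration for the non-hub weight classes, and the upper bound discretizes the weight scales, uses concentration of the class sizes below $n^{1/\alpha}$, and bounds each scale configuration by the hub-existence probability $n^{\sum_i \min(1-\alpha\beta_i,0)}\le n^{R(H)}$. Your fixed exponent grid with $\varepsilon\to 0$, the slack $\delta$, and the explicit $n^{\eta}$ threshold on hub multiplicities are tidier versions of the paper's $\Theta(\log^k n)$ covering, its constant $c_2$, and its two-case split on the number of hubs.

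One caveat concerns continuity of $R$ in $\gamma$. The constraint function is not concave, since the terms $\max(1-\alpha\beta_i,0)$ are convex, so your justification for two-sided continuity does not hold. Compactness only gives left-continuity, which is all your upper bound needs. Your lower bound instead needs right-continuity: the slack perturbation tacitly assumes $\boldsymbol{\beta}^*$ can be approached by points where the constraint holds strictly. This fails, for instance, when $\gamma$ equals the maximum of the constraint function. The paper's lower bound, which only reaches $c_2 n^{\gamma}$, leaves this degenerate case open as well.
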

Under Assumption \ref{ass:feasible_solution}, if $\gamma \leq B(H)$ the solution of \eqref{eq:opt_poly_obj}+\eqref{eq:opt_poly_constr} is trivially $R(H) = 0$. In other words, $n^\gamma$ is below the typical value, and there is no need to introduce extra-large hubs to observe $n^\gamma$ copies of $H$ in IRG. 
The problem \eqref{eq:opt_poly_obj}+\eqref{eq:opt_poly_constr} is piece-wise linear; however, it can be rewritten as a Mixed-Integer linear program (MILP):

\begin{align}
\max & \sum_{i \in V_{H}} \delta_{i}  \\
\text { s.t. } & \sum_{i \in V_{H}} \theta_{i}+\sum_{i, j \in E_{H}} \zeta_{i j}>\gamma \\
& \gamma_{i} \leq -\beta_{i}\alpha+1, \quad \delta_{i} \leq 0 \\
& \zeta_{i j} \leq \beta_{i}+\beta_{j}-1, \quad \zeta_{i j} \leq 0 \\
& \theta_{i} \leq 1-\beta_{i}\alpha+1-b_{i} \\
& \theta_{i} \leq b_{i} \\
& b_{i} \in\{0,1\}
\end{align}
Indeed, now the term $\min(1-\alpha\beta_i,0)$ is encoded by $\gamma_{i}$. The constraints $\gamma_i\leq 0$ and $\gamma_i\leq 1-\alpha\beta_i$ combined with the fact that $R(H)$ is a maximization problem, ensure that indeed $\gamma_i = \min(1-\alpha\beta_i,0)$. In a similar manner, $\zeta_{i,j}$ encodes $\min(\beta_i+\beta_j-1,0)$. The terms $\max(1-\alpha\beta_i,0)$ are slightly more complicated, and will be represented by $\theta_i$. To achieve this, we introduce extra binary variables $b_i$. When $b_i=0$, then the constraints on $\theta_i$ read $\theta_i\leq 0$ and $\theta_i\leq 2-\alpha\beta_i$. The latter constraint is always satisfied, as $\alpha\in(1,2)$ and $\beta_i\in[0,1]$. On the other hand, when $b_i=1$, then the constraints on $\theta_i$ read $\theta_i\leq 1$ and $\theta_i\leq 1-\alpha\beta_i$. In this case, the first constraint is always satisfied. Thus, the maximization problem will set $b_i=1$ whenever $1-\alpha\beta_i>0$, and zero otherwise. 

Intuitively, the optimal value of $\beta_i$ gives the scaling of the minimal weight necessary to have $n^\gamma$ copies of $H$. That is, a typical copy of $H$ in a graph with at least $n^\gamma$ copies of $H$, the vertex isomorphic to $i$ has weight proportional to $n^{\beta_i}$.

\begin{figure}
    \centering
    \includegraphics[width=\linewidth]{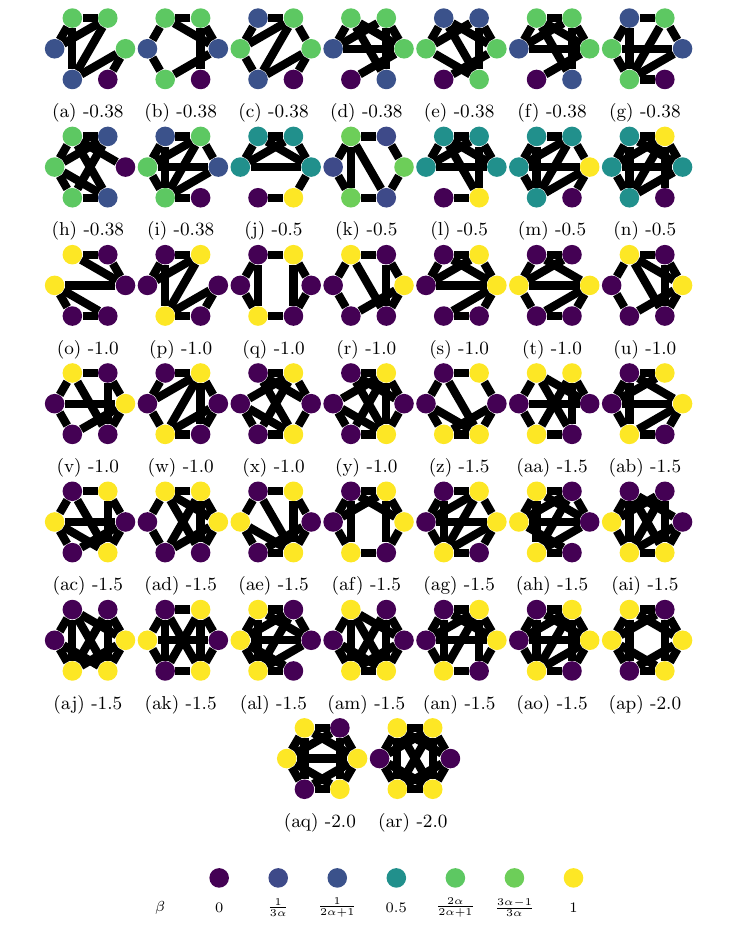}
    \caption{Deviations for all subgraphs of size 6 for $\alpha=1.5$, $\gamma = 2$. The colors indicate the optimal $\beta$ values of~\eqref{eq:opt_poly_constr}, and the subgraph captions demonstrate $R(H)$.}
    \label{fig:ldp_subgraph_tau_25}
\end{figure}

\section{Discussion}\label{sec:discussion}
\paragraph{Unconditional subgraph deviations.}
In Theorem~\ref{thm:subgraphdeviation}, we develop the large deviations scaling of the number of copies of $H$, conditionally on the weights, thus ignoring the randomness from the edge probabilities conditional on the weights. The main problem in developing large deviations principles for the unconditional version is that the occurrences of copies of $H$ are not independent events. Several copies of a subgraph $H$ may overlap, making their appearances correlated. We believe that these sources of randomness do not dominate the large deviation problem, and conjecture that the same scaling holds for the unconditional version. 
{For cliques, we deal with these dependencies in Lemma~\ref{lem:kcliqueshare}, where we use the fact that a $k$-clique always contains the subgraph which is $k-2$ triangles glued together at one edge, in which case the $k-2$ triangles are independently present conditionally on the one edge being present. For general subgraphs, this argument does not hold, and a different approach is necessary.}

\paragraph{Other subgraphs.} 
Theorem~\ref{thm:subgraphdeviation} only holds for a specific class of subgraphs. For large $k$, this class is quite large, as it contains all Hamiltonian subgraphs among the others. We believe that there exists a different class of subgraphs for which a different approach is necessary. Indeed, many subgraph counts are known not to concentrate and to depend on the presence of hubs, so their large deviation properties are expected to differ significantly.

\paragraph{General $U$-statistics.}
Instead of focusing on subgraph counts, it can also be interesting to focus on the large deviations properties of general U-statistics. Given a graph $G$ and an integer $k$, a U-statistic is a function of the kind
$$F_n(G) := \sum_{v_1,...,v_k \in V} h_n(v_1,...,v_k)$$
for some local function $h_n : V^k \to \mathbb{R}$. U-statistics include subgraph counts or homomorphism densities, as well as other relevant global observables such as clustering coefficients, several centrality measures, and average shortest path distances. 
Any two vertices with the same weight are indistinguishable in IRG; thus, with a slight abuse of notation, we can define $h_n$ as a function of the weights. It would be interesting to see the large deviation properties of these functions. We believe that our methods here could extend to specific types of U-statistics, for which 
\begin{equation}
	g(\beta_1,\dots,\beta_k):=\frac{\lim_{n\to\infty}\log(h_n(n^{\beta_1},...,n^{\beta_k}))}{\log(n)} \in (0,\infty),
\end{equation}
for all $\boldsymbol{\beta}\in[0,1]^k$, and
\begin{equation}
\lim_{n\to\infty} \frac{\log(h_n(n^{\beta_1},\dots n^{\beta_k}))}{\log(h_n(b_1n^{\beta_1},\dots b_kn^{\beta_k}))} = 1,
\end{equation}
for all fixed $b_1,\dots, b_k>0$. Finally, we assume that
\begin{align}
	B_F := \max_{\boldsymbol{\beta}} \quad  &  \sum_{i = 1}^k (1 - \alpha\beta_i) + g(\beta_1,...,\beta_k)
\end{align}
has feasible solution $\boldsymbol\beta^* \in [0, 1/\alpha]^k$, which is a necessary condition for $F_n$ to concentrate around its mean. In that case,~\eqref{eq:opt_poly_obj}+\eqref{eq:opt_poly_constr} becomes
\begin{align}
	R_F:=\max_{\boldsymbol\beta\in[0,1]^k} & \sum_{i \in V_{H}} \min \left(1 - \alpha\beta_{i},0\right) \nonumber\\
	\text { s.t. } & \sum_{i \in V_{H}} \max \left(1 - \alpha\beta_{i},0\right)+g(\beta_1,\dots,\beta_k)>\gamma.
\end{align}
Whether this optimization problem is efficiently solvable depends on the shape of the function $g(\beta_1,\dots,\beta_k)$. 
{For recent work on the case where $h_n$ does not depend on $n$, we refer to \cite{bakhshizadeh2023}.}

\paragraph{Exponential deviations.}
When~\eqref{eq:opt_poly_obj}+\eqref{eq:opt_poly_constr} is infeasible, then it is not possible to obtain the desired deviation using only a finite number of hubs, and a growing number of hubs is necessary to obtain the desired deviation. The probability of having at least $un^{\theta}$ vertices of weight at least $n^\delta$ is 
\begin{align}
    &\Prob{un^{\theta}\text{ vertices of weight }>n^{\delta}} = \exp \left(-u(\theta-1+\alpha\delta)n^{\theta} \log \left(n\right)\right)(1+o(1)),
\end{align}
when $\theta>1-\alpha\delta$. This equation shows that increasing $\theta$ influences the probability to a much larger extent than $\delta$. We therefore first assume that $\delta = 1$. We again write an optimization problem similar to~\eqref{eq:opt_poly_constr}, where we maximize the probability that the hubs are present, subject to the constraint that they form the desired number of subgraphs. Now, conditionally on $n^\beta$ vertices of weight proportionally to $n$ to be present, the number of vertices of weight at least $n^\beta_i$ scales as $n^{\min(\beta_i\alpha+1,\theta)}$. Indeed, as at least $n^\theta$ vertices of weight proportionally to $n$ are present, this number is at least $n^\theta$, and is otherwise dictated by the power-law distribution. The probability that a subgraph forms is similar to that in~\eqref{eq:opt_poly_constr}. This yields the optimization problem
\begin{align}
	& \min_{\mathbf{\alpha}\in[0,1]^k,\theta\geq 0} \theta \nonumber\\
	& \text { s.t. } \sum_{i \in V_{H}} \max \left(\beta_{i}\alpha+1, \beta\right) +\sum_{i, j \in E_{H}} \min \left(\beta_{i}+\beta_{j}-1,0\right)>\gamma
\end{align}
As~\eqref{eq:opt_poly_constr}, this optimization problem is infeasible when the desired deviation is not possible (for example, more than $n^{3}$ triangles). Again, this can be rewritten as a Mixed Integer Linear Program. When polynomial deviations suffice, setting $\beta=0$ gives a feasible solution, as the constraint then reduces to the same one as in~\eqref{eq:opt_poly_constr}. 

\section{LD for conditional number of cliques}
An important ingredient in proving Theorem~\ref{thm:LDcliques} is showing that the conditional expectation of $\mathcal{K}_n^{(k)}$ given the weights of all $n$ vertices $W_1, W_2,..., W_n$,
\begin{equation}
    C_n = \mathbb{E}\left[\mathcal{K}_n^{(k)} \;|\; \{W_i\}_{i \in [n]}\right],
\end{equation}
concentrates around its mean. This, combined with the following proposition on the large deviations properties of $C_n$, will prove Theorem \ref{thm:LDcliques}.

\begin{proposition}\label{prop:LDconditionalcliques}
    Assume $\alpha > 2 - 2/k$. Then,
    \begin{equation}
        \Prob{C_n > (1+a) m_n^{(k)}} = \Theta \left(n \Prob{W > c_a(n)}\right)^{k-2}.
    \end{equation}
\end{proposition}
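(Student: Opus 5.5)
Both bounds follow the strategy the paper uses throughout: the deviation is caused by $k-2$ exceptional hubs, and each bound is reduced to controlling the conditional expectation $C_n$ once these hubs are fixed. Write
\[
C_n=\Theta\Bigl(\sum_{i_1<\dots<i_k} f_n(W_{i_1},\dots,W_{i_k})\Bigr),
\]
a $U$-statistic in the i.i.d.\ weights. In both directions we compare $C_n$ with the contribution of cliques whose $k$ vertices are two typical vertices together with $k-2$ vertices of weight at least of order $c_a(n)$. By Lemma~\ref{lem:ca(n)}, $c_a(n)=\Theta(n^{\alpha^*})$ with $\alpha^*<1$, since $\alpha>2-2/k$. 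Hence $n\bar F(c_a(n))\to0$, and the claimed order is $\bigl(n\bar F(c_a(n))\bigr)^{k-2}$.

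\emph{Lower bound.} Let $E$ be the event that vertices $1,\dots,k-2$ all have weight at least $(1+\varepsilon)c_a(n)$. Its probability is $\bar F((1+\varepsilon)c_a(n))^{k-2}$, and after choosing which vertices are hubs, $\Theta\bigl((n\bar F(c_a(n)))^{k-2}\bigr)$ by regular variation. On $E$, the cliques containing these hubs and two further vertices $x,y$ among the remaining $n-k+2$ contribute
\[
\sum_{x<y} f_n(W_x,W_y,W_1,\dots,W_{k-2}).
\]
By monotonicity of $f_n$ this is at least $\sum_{x<y}f_n(W_x,W_y,c',\dots,c')$ with $c'=(1+\varepsilon)c_a(n)$. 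Its mean exceeds $(a+\eta)m_n^{(k)}$ for some $\eta=\eta(\varepsilon)>0$, by the definition \eqref{eq:smallest_hub_size} of $c_a(n)$ and the regular variation in Lemma~\ref{lem:ca(n)}. A second-moment (Chebyshev) bound on this two-index $U$-statistic shows that it exceeds $(a+\eta/2)m_n^{(k)}$ with probability tending to one. Independently, the cliques avoiding the hubs have conditional expectation $m_n^{(k)}(1+o_{\mathbb P}(1))$, because \eqref{eq:convergencesubgraphs} specialised to cliques gives concentration. Combining the two gives the lower bound. The variance check is the delicate part: since $\alpha<2$, $W$ has infinite variance, so we should first truncate $W_x,W_y$ at some $n^{1/2+\delta}$, losing only a negligible part of the mean.

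\emph{Upper bound.} Fix a small $\varepsilon>0$ and classify vertices as \emph{big} if $W_i>\varepsilon c_a(n)$ and \emph{small} otherwise. The number of big vertices is $\mathrm{Bin}(n,\bar F(\varepsilon c_a(n)))$, whose mean tends to $0$. Let $J$ be the number of big vertices.
\begin{itemize}
\item If $J\ge k-2$, the probability is $O\bigl((n\bar F(c_a(n)))^{k-2}\bigr)$ directly.
\item If $J=0$, we need a Fuk--Nagaev / Chernoff-type tail bound for the truncated $U$-statistic. We expand $C_n$ via its Hoeffding decomposition, or bound high moments $\mathbb E[(C_n^{\le}-m)^p]$ with $p$ large, to show that $\mathbb P(C_n^{\le}>(1+a)m_n^{(k)})$ decays faster than any fixed power. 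Truncation makes all moments polynomially controlled.
\item If $1\le J\le k-3$, the truncated $C_n$ can deviate by $am_n^{(k)}$ only through the cliques containing the $J$ big vertices. We bound their expected contribution by solving the optimisation \eqref{eq:smallest_hub_size} with fewer hubs, using $\min(\cdot,1)$ to cap each hub's effect at weight $n$. This shows that $J<k-2$ hubs, even of weight up to $n$, cannot produce $am_n^{(k)}$ extra cliques unless one lets $\varepsilon\to0$. The missing deviation must then come from the small vertices, which again costs more than any power by the truncated-moment bound.
\end{itemize}
Finally, since $\varepsilon$ only changes constants through $\bar F(\varepsilon c_a(n))\sim\varepsilon^{-\alpha}\bar F(c_a(n))$, we get $\Theta$ rather than exact asymptotics.

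\emph{Main obstacle.} The hardest step is the third case: showing that configurations with fewer than $k-2$ hubs, or with hubs slightly below $c_a(n)$ compensated by many moderately large vertices, are negligible. This needs a quantitative comparison of the polynomial exponents, and it is exactly where the condition $\alpha>2-2/k$ (equivalently $\alpha^*<1$) enters. My first attempt would be a dyadic decomposition of the weight ranges, combined with the exponent optimisation in the style of \eqref{eq:opt_poly_obj}, to show that each competing configuration has strictly smaller probability exponent than $(k-2)(1-\alpha\alpha^*)$.
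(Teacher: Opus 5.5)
Your lower bound and the case $J\ge k-2$ are fine. The explicit Chebyshev argument on the truncated two-index statistic is more careful than the corresponding step in the paper. Splitting on the number $J$ of vertices above $\varepsilon c_a(n)$ is exactly the paper's decomposition. The gap is the upper bound for $J\le k-3$, which you delegate to a tail estimate for the truncated statistic that you do not prove and that, as stated, is false. For fixed $\varepsilon$, $\Prob{C_n>(1+a)m_n^{(k)},\,J=0}$ does not decay faster than every power. The truncation level $\varepsilon c_a(n)$ lies polynomially above $n^{1/\alpha}$, so $M$ vertices with weights in $[\varepsilon c_a(n)/2,\varepsilon c_a(n)]$ occur with probability of order $(n\bar F(c_a(n)))^{M}$. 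By Lemma~\ref{lem:contribution_hubs=k-2} (your own lower-bound computation), each of the $\binom{M}{k-2}$ choices of $k-2$ of them adds at least about $(\varepsilon/2)^{2(\alpha-1)}a\,m_n^{(k)}$ cliques, so the excess is reached once $M=M(\varepsilon)$ is large. What is true, and what suffices, is that the decay exponent can be pushed above $(k-2)(\alpha\alpha^*-1)$ by taking $\varepsilon$ small. Proving this means controlling how many vertices have weight between $b(n)=\bar F^{-1}(1/n)n^{\delta}$ and $\varepsilon c_a(n)$. Appealing to Hoeffding decompositions or to ``polynomially controlled'' moments does not address this, because these few near-threshold vertices are precisely what drives the upper tail of the truncated statistic. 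The paper's ingredient here is the empirical-tail event $E_n(A,c,\delta)$ of \cite{stegehuis2023}. It fails with probability $o(n^{-\beta})$ once $\lceil c\rceil>\beta/(\alpha\delta)$, and on it there are at most $(1+A)c$ vertices above $b(n)$ and $\bar F_n\le\bar F_n^*$. The small/medium/large splitting together with Lemmas~\ref{lem:contribution_hubs<k-2}--\ref{lem:contribution_hubs=k-2} then gives, asymptotically, the deterministic bound $C_n\le(1+A)^k\bigl(1+c^{k-2}\varepsilon^{2(\alpha-1)}a\bigr)m_n^{(k)}$. One fixes $\beta>(k-2)(\alpha\alpha^*-1)$ first, then $c$, then $A$ and $\varepsilon$.

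For $1\le J\le k-3$ your reduction to the non-big vertices is sound, but the estimate you invoke is the wrong one. What must be large is the number of $(k-j)$-cliques among the remaining vertices, at level $\Theta(m_n^{(k)})=\Theta(n^{j(2-\alpha)/2})\,m_n^{(k-j)}$. That is a polynomially large deviation of a smaller clique count, not the $k$-clique statistic at level $(1+a)m_n^{(k)}$. This event is not intrinsically superpolynomially rare: with $\tilde a=\Theta(n^{j(1-\alpha/2)})$, Lemma~\ref{lem:ca(n)} gives $c^{(k-j)}_{\tilde a}(n)=\Theta(c_a(n))$, so without truncation it costs only $(n\bar F(c_a(n)))^{k-j-2}$. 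The paper handles it by induction on the clique size (base case triangles, \cite{stegehuis2023}) and multiplies by $\Prob{L_n(\varepsilon c_a(n))=J}\asymp(n\bar F(c_a(n)))^{J}$. Alternatively, one could rerun the $E_n(A,c,\delta)$ argument for $(k-j)$-cliques among vertices below $\varepsilon c_a(n)$. Either way a concrete argument is needed, and your proposed fallback cannot provide it. A dyadic decomposition with exponent optimisation, as in the proof of Theorem~\ref{thm:subgraphdeviation}, loses polylogarithmic factors through the union bound over $\Theta((\log n)^k)$ cells. Moreover, the cells with weights of order $c_a(n)$ carry exactly the optimal order $n^{-(k-2)(\alpha\alpha^*-1)+o(1)}$, so near-optimal configurations must be controlled up to constants. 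That is what the $\varepsilon$-truncation together with the bounded number of vertices above $b(n)$ achieves. As written, your proposal establishes only the lower bound and the trivial case $J\ge k-2$ of the upper bound.
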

The rest of this section will be devoted to proving Proposition~\ref{prop:LDconditionalcliques}.

We now analyze the number of $k$-cliques conditionally on the $n$ weights. 
Let $C_n$ be expected number of $k$-cliques, given weights $\{W_i\}_{i=1,...,n}$. We aim to get upper and lower bounds for $\Prob{C_n > (1+a)m_n^{(k)}}$. We will see that the most likely way to generate additional $a \cdot m_n^{(k)}$ cliques is to have a certain number of hubs in the graph. The conditional number of cliques is given by
\begin{equation}
    C_n = \binom{n}{k} \int_{0}^{\infty} \cdots \int_{0}^{\infty} f_n(x_1,...,x_k) \dd F_n(x_1) \cdots \dd F_n(x_k),
\end{equation}
where $F_n$ is the empirical weight distribution
\begin{equation}
    F_n(x) = \frac{1}{n} \sum_{i}^n \mathbbm{1}_{(W_i \leq x)}.
\end{equation}
We make use of a convenient upper bound for the complementary empirical distribution function $\bar{F}_n$. Define $a(n) = \bar{F}^{-1}(1/n)n^{-\delta}$ and $b(n) = \bar{F}^{-1}(1/n)n^{\delta}$. In particular, $a(n)$ and $b(n) $ are regularly varying of index $1/\alpha - \delta$ and $1/\alpha + \delta$. Assume $\delta \in (0, 1/\alpha -1), c>0, A>0$ and let $E_n(A,c,\delta)$ be the event
\begin{equation}
    \left\{ 
    \sup_{x < a(n)} \frac{\bar{F}_n(x)}{\bar{F}(x)} \leq 1 + A, 
    \sup_{x \in [a(n),b(n)]} \bar{F}_n(x) \leq (1+A) \bar{F}(a(n)), 
    \sup_{x > b(n)} \bar{F}_n \leq (1 + A)c/n
    \right\} \nonumber.
\end{equation}
By Proposition 2.2 in \cite{stegehuis2023},  $\Prob{E_n(A,c,\delta)} \geq 1 - o(n^{-\beta})$ for any fixed $A>0$, as soon as $\lceil c \rceil > \beta/\alpha \delta$. In particular, on the event $E_n(A,c,\delta)$, the empirical distribution function can be bounded by
\begin{equation}\label{eq:empdistrbound}
    \bar{F}_n^*(x) = (1+A) \left[ \bar{F}(x) \mathbbm{1}_{(x < a(n))} + \bar{F}(a(n)) \mathbbm{1}_{(a(n) \leq x < b(n))} + c/n \mathbbm{1}_{(x \geq b(n))} \right].
\end{equation}

\subsection{Contribution from big hubs}
Assume the IRG contains $h$ hubs. A hub is a vertex $i$ with weight $W_i \geq b(n)$, which is regularly varying of index $\alpha_b \in (1/2,1)$. We want to compute the contribution of these $h$ hubs to the total number of cliques in the graph. Formally, let $\mathcal{B} = \{b_1(n),...,b_h(n)\}$ be regularly varying of indices $\alpha_{b_1},...,\alpha_{b_h} \in (1/2,1)$. Assume that the IRG contains $h$ hubs with weights $\mathcal{B}$. Then, the expected number of cliques generated on these hubs is $\binom{n - h}{k - h} S_{\mathcal{B}}^{(h)}(n)$, with
\begin{equation}
    S_{\mathcal{B}}^{(h)}(n) := \int_{1}^{\infty} \cdots \int_{1}^{\infty} f_n(b_1(n),...,b_h(n), x_{h+1}, ..., x_k) \dd F(x_{h+1}) \cdots \dd F(x_k).
\end{equation}

We now compute $S_{\mathcal{B}}^{(h)}(n)$ in three different cases for $h$.

\begin{lemma}[Less than $k-2$ hubs]\label{lem:contribution_hubs<k-2}
Let $1 \leq h \leq k-3$ and $\mathcal{B} = \{b_i(n)\}_{i=1,...,h}$. Then
\[ S_{\mathcal{B}}^{(h)}(n) = O(n^{-\alpha(k-h)/2}). \]
\end{lemma}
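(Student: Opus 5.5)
The approach is to bound every edge factor by one except those on the non-hub vertices. In $f_n(b_1,\dots,b_h,x_{h+1},\dots,x_k)$ we drop all factors involving at least one hub, since $\min(\cdot,1)\le 1$. Write $m=k-h\ge 3$. This leaves
\[
S_{\mathcal{B}}^{(h)}(n)\le \int_1^\infty\!\!\cdots\!\int_1^\infty \prod_{h<i<j\le k}\min\Big(\frac{x_ix_j}{\mu n},1\Big)\dd F(x_{h+1})\cdots\dd F(x_k),
\]
which is exactly the probability that $m$ i.i.d.-weight vertices form an $m$-clique. The right-hand side equals $m_n^{(m)}/\binom{n}{m}$ up to constants, and by Lemma~\ref{lem:averagecliquenumber} it is of order $(n\bar F(\sqrt n))^m n^{-m}=\bar F(\sqrt n)^m= n^{-\alpha m/2}(1+o(1))$ by \eqref{eq:Fx}. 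This gives the claim.

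To keep the argument self-contained rather than relying on the precise constant of Lemma~\ref{lem:averagecliquenumber}, I would prove the integral bound directly with the substitution $x_i=\sqrt{n}\,u_i$. The density is $f_W(\sqrt n u)\sqrt n\,du=\alpha n^{-\alpha/2}u^{-\alpha-1}(1+o(1))\,du$, uniformly for $u\ge n^{-1/2}$, up to constants since $f_W$ is regularly varying with bounded ratio on $[1,\infty)$. The integral then becomes $n^{-\alpha m/2}$ times
\[
\int_{n^{-1/2}}^\infty\!\!\cdots\!\int_{n^{-1/2}}^\infty \prod_{i<j}\min(u_iu_j/\mu,1)\prod_i u_i^{-\alpha-1}\,du_i .
\]
It remains to show that this integral is bounded uniformly in $n$, i.e.\ that the limiting integral over $(0,\infty)^m$ converges.

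The main obstacle is verifying this convergence for $\alpha\in(1,2)$, at both small $u$ and large $u$. For small $u_i$, vertex $i$ contributes the factor $u_i^{m-1}\cdot u_i^{-\alpha-1}=u_i^{m-2-\alpha}$, which is integrable at $0$ since $m-2-\alpha>-1$ iff $m>1+\alpha$; this holds because $m\ge3>1+\alpha$. This is precisely where $h\le k-3$ is needed. For large $u_i$, one has to check that the edge factors saturate only in a way that leaves enough decay. The cleanest route is to order $u_{(1)}\ge\dots\ge u_{(m)}$ and bound $\min(u_iu_j,1)\le \min(u_{(1)}u_j,1)$ or similar. Alternatively, one can invoke \cite[Lemma 4.2]{vdhofstad21}: the clique satisfies Assumption~\ref{ass:feasible_solution} with optimum $\beta_i=1/2<1/\alpha$, so the integral is dominated by $u=\Theta(1)$ and converges. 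I would cite that finiteness, which is already implicit in Lemma~\ref{lem:averagecliquenumber}. Since the integrand bound is monotone in the lower limit, the truncation at $n^{-1/2}$ is harmless, and $S_{\mathcal{B}}^{(h)}(n)=O(n^{-\alpha(k-h)/2})$ follows.
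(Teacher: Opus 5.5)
Your proposal is correct and is essentially the paper's proof: bounding every factor involving a hub by $1$ reduces $S_{\mathcal{B}}^{(h)}(n)$ to the probability that $k-h\ge 3$ i.i.d.-weight vertices form a clique, and Lemma~\ref{lem:averagecliquenumber} shows this is $\Theta(n^{-\alpha(k-h)/2})$. Your extra substitution $x_i=\sqrt{n}\,u_i$ is a reasonable sanity check that correctly shows where $h\le k-3$ is used (integrability of $u^{m-2-\alpha}$ at $0$), but it needs only the upper bound $f_W(x)\le Cx^{-\alpha-1}$ rather than a uniform $(1+o(1))$, and its large-$u$ part relies on the same convergence fact already contained in Lemma~\ref{lem:averagecliquenumber}.
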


\begin{proof}
We have
\begin{align}\label{eq:few_hubs_upper_bound}
S_{\mathcal{B}}^{(h)} & = \int_{1}^{\infty} \cdots \int_{1}^{\infty} f_n(b_1(n),...,b_{h}(n), x_{h+1},..., x_k) \dd F(x_{h+1}) \cdots \dd F(x_k)\nonumber\\
 & \leq \int_{1}^{\infty} \cdots \int_{1}^{\infty} \prod_{h+1 \leq i < j \leq k} \left(1 \wedge \frac{x_i x_j}{\mu n}\right) \dd F(x_{h+1}) \cdots \dd F(x_k).
\end{align}
Furthermore, the right-hand side in the bound of~\eqref{eq:few_hubs_upper_bound} is the probability that a clique of size $k-h$ is formed on $k-h$ uniformly chosen vertices. By Lemma \ref{lem:averagecliquenumber}, this quantity is $\Theta(n^{-\alpha(k-h)/2})$.
\end{proof}

\begin{lemma}[$k-1$ hubs]\label{lem:contribution_hubs>k-2}
Let $\mathcal{B} = \{b_i(n)\}_{i=1,...,k-1}$. Then
\[ S_{\mathcal{B}}^{(k-1)} =  O\left(\frac{\max_i b_i(n)}{n}\right)^{\alpha}. \]
\end{lemma}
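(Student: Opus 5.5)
With $k-1$ hubs there is only one free vertex $x := x_k$. I will bound $f_n$ by dropping every edge factor among the hubs (each is at most $1$) and keeping only the $k-1$ factors joining the free vertex to the hubs. This gives
\[
S_{\mathcal{B}}^{(k-1)}(n) \le \int_1^\infty \prod_{i=1}^{k-1}\left(1\wedge \frac{b_i(n)\, x}{\mu n}\right) \dd F(x) \le \int_1^\infty \left(1\wedge \frac{b_{\max}(n)\, x}{\mu n}\right)^{k-1} \dd F(x),
\]
where $b_{\max}(n) = \max_i b_i(n)$. The second inequality uses that each factor is nondecreasing in $b_i(n)$. Since all hub indices lie in $(1/2,1)$, the threshold $t_n := \mu n / b_{\max}(n)$ is regularly varying with positive index, so in particular $t_n \to \infty$. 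I will then split the integral at $x = t_n$.

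On $\{x \ge t_n\}$ the integrand is at most $1$, so this piece is at most $\bar F(t_n) = t_n^{-\alpha}(1+o(1)) = O\bigl(b_{\max}(n)/n\bigr)^{\alpha}$ by~\eqref{eq:Fx}. On $\{x < t_n\}$ the integrand equals $(x/t_n)^{k-1}$. Using the density $f_W(x) = \alpha x^{-\alpha-1}(1+o(1))$, this piece is bounded by a constant times
\[
t_n^{-(k-1)} \int_1^{t_n} x^{k-2-\alpha} \dd x.
\]
Because $k \ge 3$ and $\alpha < 2$, the exponent satisfies $k-2-\alpha > -1$. The integral is therefore $O\bigl(t_n^{k-1-\alpha}\bigr)$, and this piece is also $O(t_n^{-\alpha})$. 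Adding the two pieces gives $S_{\mathcal{B}}^{(k-1)} = O\bigl(b_{\max}(n)/n\bigr)^{\alpha}$.

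The only point needing care is the second piece. I have to check that the power $x^{k-1}$ is large enough that the integral is dominated by its upper endpoint $t_n$, so that no contribution from $x = O(1)$ survives. This is exactly the condition $k - 1 > \alpha$, which holds for every $k \ge 3$ since $\alpha < 2$. The $(1+o(1))$ in the density only affects constants, because the density is bounded by a constant multiple of $x^{-\alpha-1}$ on $[1,\infty)$.
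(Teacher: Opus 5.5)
Your proof is correct and is essentially the paper's argument: drop the hub--hub factors, replace every hub weight by $\max_i b_i(n)$, split the integral at $t_n=\mu n/\max_i b_i(n)$, and bound the two pieces by $\bar F(t_n)$ and $t_n^{-(k-1)}\int_1^{t_n}x^{k-1}\dd F(x)=O(t_n^{-\alpha})$, the latter using $k-1>\alpha$. One small correction: the density assumption controls $f_W$ only for large $x$, so it does not give a bound on all of $[1,\infty)$; however, any fixed range $[1,x_0]$ contributes at most $(x_0/t_n)^{k-1}=o(t_n^{-\alpha})$, so your conclusion is unaffected.
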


\begin{proof}
Let $\bar{b}= \max_i b_i(n)$. Then
\begin{align}
S_{\mathcal{B}}^{(k-1)} &= \int_{1}^{\infty} f_n(b_1(n),...,b_{k-1}(n), x) \dd F(x) \nonumber\\
&= \int_{1}^{\infty} \prod_{i=1}^{k-1}\left(1 \wedge \frac{x b_i(n)}{\mu n}\right) \dd F(x) \leq \int_{1}^{\infty}\left(1 \wedge \frac{x \bar{b}}{\mu n}\right)^{k-1} \dd F(x) .
\end{align}
To compute the last integral, we split the integration domain
\begin{align}
S_{\mathcal{B}}^{(k-1)} &\leq \left( \frac{\bar{b}}{\mu n} \right)^{k-1} \int_{1}^{\mu n/\bar{b}} x^{k-1} \dd F(x) + \int_{\mu n/\bar{b}}^{\infty} \dd F(x) \nonumber\\
&= \left( \frac{\bar{b}}{\mu n} \right)^{k-1} \frac{(\mu n / \bar{b})^{k-\alpha-1} - 1}{k-\alpha-1}(1+o(1)) + \bar{F}(\mu n / \bar{b}) \nonumber\\
&= \Theta\left(\frac{\bar{b}}{n}\right)^{\alpha}.
\end{align}
\end{proof}

\begin{lemma}[$k-2$ hubs]\label{lem:contribution_hubs=k-2}
Assume that $\mathcal{B} = \{b_i(n)\}_{i=1,...,k-2}$ are ordered increasingly and consider $S_{\mathcal{B}}^{(k-2)}$. When $b(n) = b_1(n) = ... = b_{k-2}(n)$, we have
\begin{equation}\label{eq:boundk-2equal}
    S_{\mathcal{B}}^{(k-2)}(n) =\frac{K_1}{n}\left(\frac{b(n)}{n}\right)^{2(\alpha-1)}(1+o(1))
\end{equation}
for some $K_1 > 0$. In particular, when $b(n)$ is regularly varying of index $\alpha_b$, $S_{\mathcal{B}}^{(k-2)}$ is regularly varying of index $2(\alpha-1)(\alpha_b-1)-1$. \\ Moreover, if $b_1(n) \ll b_{k-2}(n)$, then
\begin{equation}\label{eq:boundk-2different}
    S_{\mathcal{B}}^{(k-2)}(n) = o\left(\frac{1}{n}\left(\frac{b_{k-2}(n)}{n}\right)^{2(\alpha-1)}\right)
\end{equation}
\end{lemma}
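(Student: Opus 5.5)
The plan is to compute $S_{\mathcal{B}}^{(k-2)}(n)$ directly as a double integral over the two non-hub weights $x,y$. Write
\[
S_{\mathcal{B}}^{(k-2)}(n)=\int_1^\infty\int_1^\infty \Bigl(1\wedge \tfrac{xy}{\mu n}\Bigr)\prod_{i=1}^{k-2}\Bigl(1\wedge\tfrac{x b_i}{\mu n}\Bigr)\Bigl(1\wedge\tfrac{y b_i}{\mu n}\Bigr)\prod_{i<j}\Bigl(1\wedge \tfrac{b_ib_j}{\mu n}\Bigr)\dd F(x)\dd F(y).
\]
Since every $b_i$ has index in $(1/2,1)$, $b_ib_j\gg n$ and the hub--hub factors equal $1$ for $n$ large. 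In the equal case $b_i=b$ the integrand is
\[
g(x,y)=\Bigl(1\wedge\tfrac{xy}{\mu n}\Bigr)\Bigl(1\wedge\tfrac{xb}{\mu n}\Bigr)^{k-2}\Bigl(1\wedge \tfrac{yb}{\mu n}\Bigr)^{k-2}.
\]

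I will substitute $x=(\mu n/b)u$ and $y=(\mu n/b)v$. Then $xy/(\mu n)=(\mu n/b^2)uv$, $xb/(\mu n)=u$, and $\dd F(x)=\alpha (\mu n/b)^{-\alpha}u^{-\alpha-1}(1+o(1))\dd u$ by the density assumption, which applies since $\mu n/b\to\infty$. This gives
\[
S=\alpha^2\Bigl(\tfrac{\mu n}{b}\Bigr)^{-2\alpha}\int\!\!\int \Bigl(1\wedge \tfrac{\mu n}{b^2}uv\Bigr)(1\wedge u)^{k-2}(1\wedge v)^{k-2}u^{-\alpha-1}v^{-\alpha-1}\dd u\dd v\,(1+o(1)).
\]
Because $\mu n/b^2\to 0$, on the bulk of the region the min equals $\tfrac{\mu n}{b^2}uv$. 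Pulling that factor out leaves
\[
\int_0^\infty (1\wedge u)^{k-2}u^{-\alpha}\dd u,
\]
squared. This integral converges: at $0$ the exponent is $k-2-\alpha>-1$ as long as $k\ge 3$ (and $k-2-\alpha> -1$ needs $k>\alpha+1$, true for $k\ge3$), and at $\infty$ the integrand is $u^{-\alpha}$ with $\alpha>1$. The prefactor is then $(n/b)^{-2\alpha}\cdot n/b^2=n^{1-2\alpha}b^{2\alpha-2}=\frac1n (b/n)^{2(\alpha-1)}$, which is the claimed form with $K_1$ collecting the constants and the squared integral.

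The main obstacle is justifying these limits by dominated convergence. Three points need checking. First, the lower limit $u\ge b/(\mu n)\to0$ is handled by the convergence at $0$. Second, the region where $\tfrac{\mu n}{b^2}uv>1$, i.e.\ $uv>b^2/(\mu n)\to\infty$, must contribute negligibly. There I bound the min by $1$, so the contribution is at most $(n/b)^{-2\alpha}\int\!\!\int_{uv>M}u^{-\alpha-1}v^{-\alpha-1}\dd u\dd v$ with $M=b^2/(\mu n)$, restricted to $u,v\gtrsim b/n$. Integrating in $v$ gives a bound of order $(n/b)^{-2\alpha}M^{-\alpha}\cdot\log$ terms. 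Since $M^{-\alpha}\ll M^{-1}$ for $\alpha>1$, this is $o$ of the main term; a possible log factor is harmless against the polynomial gap, though I expect the truncation of $u$ to near $M$ to need a careful split. Third, the $(1+o(1))$ in the density must be uniform in the relevant range, which holds because $x\ge \mu n/b\cdot u$ and the small-$u$ region is negligible by dominated convergence. The regular variation claim then follows immediately: the index is $-1+2(\alpha-1)(\alpha_b-1)$.

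For the unequal case $b_1\ll b_{k-2}$, I will bound each factor $(1\wedge xb_i/\mu n)$ for $i<k-2$ by $(1\wedge x b_{k-2}/\mu n)$, except that for $i=1$ I keep $b_1$. Repeating the computation gives the main term times a factor involving $\int (1\wedge u)^{k-3}(1\wedge (b_1/b_{k-2})u)u^{-\alpha}\dd u$ in each coordinate. Using $1\wedge\epsilon u\le(\epsilon u)^{s}$ for a small $s\in(0,\alpha-1)$, this factor is $O((b_1/b_{k-2})^{s})=o(1)$. One caveat is that the $xy$ factor still uses $\mu n/b_{k-2}^2$ scaling, which is unchanged. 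Combining the two coordinates gives the $o(\cdot)$ statement.
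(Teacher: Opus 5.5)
Your proposal is correct and is essentially the paper's argument. You linearize $1\wedge\frac{xy}{\mu n}$, show that the region $xy>\mu n$ is negligible (your bound $(n/b)^{-2\alpha}M^{-\alpha}\log n$ is exactly the paper's $\Theta(n^{-\alpha}\log n)$), and reduce to the square of a one-dimensional integral at scale $\mu n/b$. Its value $\alpha\int_0^\infty(1\wedge u)^{k-2}u^{-\alpha}\dd u=\frac{\alpha(k-2)}{(\alpha-1)(k-1-\alpha)}$ matches the paper's constant; the paper gets it by splitting the domain at the points $\mu n/b_\ell$ rather than by rescaling.

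For the unequal case, you combine the comparison $b_i\le b_{k-2}$ with $1\wedge\epsilon u\le(\epsilon u)^s$ for $s\in(0,\alpha-1)$, and use the trivial bound $1\wedge t\le t$ on the $xy$ factor. This is a cleaner substitute for the paper's term-by-term estimate of its explicit formula. It also avoids the paper's step $b_\ell^{-(\ell+1-\alpha)}\le b_{k-2}^{-(\ell+1-\alpha)}$, which runs the wrong way; the paper's conclusion there still holds via $b_1b_\ell^{\alpha-2}\le b_1^{\alpha-1}$.
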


\begin{proof}
In this setting
\begin{equation}
    S_{\mathcal{B}}^{(k-2)}(n) = \int_{0}^{\infty}\int_{0}^{\infty} f_n(b_1(n),...,b_{k-2}(n),x,y) \dd F(x) \dd F(y).
\end{equation}
Using the definition of $f_n$ gives
\begin{equation}
    S_{\mathcal{B}}^{(k-2)}(n) = \int_{0}^{\infty}\int_{0}^{\infty} \left(\frac{xy}{\mu n} \wedge 1\right) \prod_{i=1}^{k-2} \left(\frac{x b_i(n)}{\mu n} \wedge 1 \right) \left(\frac{y b_i(n)}{\mu n} \wedge 1 \right) \dd F(x) \dd F(y).
\end{equation}
In the region $xy>\mu n$, the integrand is upper bounded by 1, so that the contribution of that region to $S_{\mathcal{B}}^{(k-2)}(n)$ can be upper bounded by $ \Prob{xy>\mu n}$, which is $\Theta(n^{-\alpha}\log(n))$. In particular, from the Lemma assumption, $b_i(n)$ is slowly varying with index in $(1/2,1)$ for any $i$, therefore $n^{-\alpha}\log(n)$ is negligible compared to the right hand sides of Equations \eqref{eq:boundk-2equal}, \eqref{eq:boundk-2different}. Indeed, when $b_1(n)=...=b_{k-2}(n)=b(n)$, 
$$\frac{1}{n}\left(\frac{b(n)}{n}\right)^{2(\alpha-1)} > n^{-1}(n^{\delta-1/2})^{2(\alpha -1)} = \omega(n^{-\alpha + 2\delta(\alpha-1)}) = \omega(n^{-\alpha}\log(n)),$$ 
for some $\delta > 0$, and similarly $$\frac{1}{n}\left(\frac{b_{k-2}(n)}{n}\right)^{2(\alpha-1)} = \omega(n^{-\alpha}\log(n)).$$

Next, assume that $xy < \mu n$. First, observe that the integral in this region is asymptotically lower bounded by the integral restricted to $x< \sqrt{\mu n}, y < \sqrt{\mu n}$, and it is upper bounded asymptotically by the integral where the constraint $xy<\mu n$ is dropped. These two quantities are asymptotically identical. Therefore, we only need to solve the equivalent separable and symmetric integral:
\begin{equation}
    S_{\mathcal{B}}^{(k-2)}(n) = \frac{1}{\mu n} \left[\int_{1}^{\infty} x \prod_{i=1}^{k-2} \left(\frac{x b_i(n)}{\mu n} \wedge 1 \right) \dd F(x) \right]^2(1+o(1)).
\end{equation}
For each $i$, the minimum in the product is achieved at 1 when $x > \frac{\mu n}{b_i(n)}$. Hence, we can solve the integral by splitting the integration domain into sub-intervals $I_\ell := \left[\mu n/b_{\ell+1}(n),\mu n/b_\ell(n)\right]$
with $\ell=0,...,k-2$ (and conventionally $I_{k-2} = [0,\mu n/b_{k-2}(n)], I_0 = [\mu n /b_1(n), \infty)$). 
Then,
\begin{align}\label{eq:boundk-2explicit}
    S_{\mathcal{B}}^{(k-2)}(n) &=\frac{1}{\mu n} \left(\int_{\mu n/b_1(n)}^{\infty} x \dd F(x) + \sum_{\ell=1}^{k-2} \frac{\prod_{j=1}^\ell b_j(n)}{(\mu n)^\ell} \int_{I_\ell} x^{\ell+1} \dd F(x) \right)^2(1+o(1)) \nonumber \\
    &= (1+o(1))\frac{1}{\mu n} \Bigg(\frac{\alpha}{\alpha-1}\left(\frac{b_1(n)}{\mu n}\right)^{\alpha-1} + \\
    & \hspace{1cm} \sum_{\ell=1}^{k-2}\frac{\alpha}{\ell - \alpha + 1}\frac{\prod_{j=1}^\ell b_j(n)}{(\mu n)^{\alpha-1}} \left(b_\ell(n)^{-(\ell+1-\alpha)} - b_{\ell+1}(n)^{-(\ell+1-\alpha)}\right) \Bigg) ^2 
\end{align}
When $b_1(n) = b_2(n) = ... = b_{k-2}(n)$ most of the terms cancel out and
\begin{equation}
    S_{\mathcal{B}}^{(k-2)}(n) = \frac{1}{\mu n}\left(\frac{\alpha(k-2)}{(\alpha-1)(k-\alpha-1)} \left(\frac{b_i(n)}{\mu n}\right)^{\alpha -1}\right)^2(1+o(1))
\end{equation}
which proves \eqref{eq:boundk-2equal}. 

Next, assume that $b_1(n) \ll b_{k-2}(n)$. On the right-hand side of \eqref{eq:boundk-2explicit} the term
\[ \left(\frac{b_1(n)}{\mu n}\right)^{\alpha-1} = o\left(\frac{b_{k-2}(n)}{n}\right)^{\alpha -1} \]
and for each term in the sum when $\ell > 1$ (using the bound $b_2(n) \leq b_3(n) \leq ... \leq b_{k-2}(n)$)
\begin{align}
\frac{\prod_{j=1}^{\ell} b_j(n)}{(\mu n)^{\alpha-1}}\left(b_\ell(n)^{-(\ell+1-\alpha)} - b_{\ell+1}(n)^{-(\ell+1-\alpha)}\right) & \leq  \frac{b_1(n) b_{k-2}(n)^{\ell -1}}{(\mu n)^{\alpha -1}}b_{k-2}(n)^{-(\ell +1 -\alpha)}\nonumber\\
& = \frac{b_1(n)}{b_{k-2}(n)}\left( \frac{b_{k-2}(n)}{\mu n} \right)^{\alpha-1} \nonumber\\
& = o\left(\frac{b_{k-2}(n)}{n}\right)^{\alpha -1} \end{align}
Hence~\eqref{eq:boundk-2different} follows.
\end{proof}

\subsection{LD for conditional expected cliques}
Denote by $L_n(z)$ the number of vertices with a weight larger than $z$. We now show that whenever the vertex weights are all smaller than $\varepsilon c_a(n)$, $\mathcal{K}_k \leq (1+a) m_n^{(k)}$  with high probability.

\begin{proposition}[No hubs]\label{prop:no_hubs}
Assume that $\alpha > 2 - 2/k$ and let $c_a(n)$ be defined as in \eqref{eq:smallest_hub_size}. Then, there exists $\varepsilon > 0$ such that 
\begin{equation}\label{eq:deviation0hubs}
    \Prob{C_n > (1 + a) m_n^{(k)}; L_n(\varepsilon c_a(n)) = 0} = o(n^{-\beta})
\end{equation}
for arbitrary $\beta$.
\end{proposition}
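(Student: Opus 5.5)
We intersect with the good event $E_n(A,c,\delta)$. By Proposition 2.2 of \cite{stegehuis2023}, its complement has probability $o(n^{-\beta})$ once $\lceil c\rceil>\beta/(\alpha\delta)$. It therefore suffices to show that on $E_n(A,c,\delta)\cap\{L_n(\varepsilon c_a(n))=0\}$ the conditional expectation $C_n$ is deterministically at most $(1+a)m_n^{(k)}$ for $n$ large. On this event $\bar F_n(x)\le \bar F_n^*(x)$ for all $x$, and $\bar F_n(x)=0$ for $x\ge \varepsilon c_a(n)$. Since $f_n$ is nondecreasing in each argument, we may dominate the empirical measure by the (truncated) measure associated with $\bar F_n^*(x)\mathbbm{1}_{(x<\varepsilon c_a(n))}$. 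This uses the standard coupling argument: stochastic domination of tails implies domination of integrals of coordinatewise increasing functions, applied coordinate by coordinate.

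Next we decompose $C_n$ according to how many of the $k$ vertices have weight in the "hub" range $[b(n),\varepsilon c_a(n))$. The case with no vertex above $b(n)$ is handled by splitting the dominating measure into the part $(1+A)\bar F$ on $[1,a(n))$ and the flat part on $[a(n),b(n))$. The flat part carries mass at most $(1+A)\bar F(a(n))$, which is $n^{-1+\alpha\delta}$ up to slowly varying factors. Vertices in $[a(n),b(n))$ are of weight near $n^{1/\alpha}\ll n$ times $n^{\pm\delta}$. Choosing $\delta$ small, their contribution is polynomially smaller than $m_n^{(k)}$, since the typical cliques live on weights $\approx\sqrt n$, which lies below $a(n)$ because $1/\alpha>1/2$. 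The part coming from $[1,a(n))$ alone is at most $(1+A)^k m_n^{(k)}(1+o(1))$. Taking $A$ small so that $(1+A)^k<1+a/3$ controls this term.

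In the hub range there are at most $c$ vertices (on $E_n$, $n\bar F_n(b(n))\le(1+A)c$), so at most $c$ hubs, each of weight $<\varepsilon c_a(n)$. We treat the subcases as follows.
\begin{itemize}
\item For $h\le k-3$ hubs, Lemma~\ref{lem:contribution_hubs<k-2} gives a contribution $O(n^{k-h}n^{-\alpha(k-h)/2})$ per choice of hubs. Since $(k-h)(2-\alpha)/2<k(2-\alpha)/2$, this is $o(m_n^{(k)})$.
\item For $h\ge k-1$ hubs, Lemma~\ref{lem:contribution_hubs>k-2} gives $O(n(\varepsilon c_a(n)/n)^\alpha)\le O(n)\cdot O(1)$. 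Here we must compare with $m_n^{(k)}$. The exponents $1+\alpha(\alpha^*-1)$ and $k(2-\alpha)/2$ need to be checked; with $\alpha>2-2/k$ one has $\alpha^*<1$, and I expect the $k-1$ hub term to be dominated after verifying the exponent inequality. This is a routine but necessary check. With $k$ hubs the contribution is $O(1)$.
\item The crucial case is exactly $k-2$ hubs with weights $b_i<\varepsilon c_a(n)$. By monotonicity and Lemma~\ref{lem:contribution_hubs=k-2}, the contribution is at most $\binom{n}{2}S^{(k-2)}$ evaluated at all weights equal to $\varepsilon c_a(n)$. This is $\varepsilon^{2(\alpha-1)}$ times the value at $c_a(n)$, i.e. at most $\varepsilon^{2(\alpha-1)}a m_n^{(k)}(1+o(1))$ by the definition \eqref{eq:smallest_hub_size} and regular variation. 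Summing over at most $\binom{c}{k-2}$ hub choices and picking $\varepsilon$ small makes this below $a m_n^{(k)}/3$.
\end{itemize}

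The main obstacle is bookkeeping the mixed terms, where non-hub vertices are integrated against $\bar F_n^*$ rather than $\bar F$. The $S^{(h)}$ lemmas are stated with $dF$, so we must argue that replacing $F$ by the dominating measure only changes constants. This holds because $\bar F_n^*\le (1+A)\bar F$ below $a(n)$, and the intermediate band $[a(n),b(n))$ gives negligible extra mass. We also need the exponent comparison in the $h=k-1$ case, which is where $\alpha>2-2/k$ enters. Once each piece is bounded, we obtain $C_n\le(1+A)^km_n^{(k)}+a m_n^{(k)}/3+o(m_n^{(k)})<(1+a)m_n^{(k)}$ on the event, which gives \eqref{eq:deviation0hubs}.
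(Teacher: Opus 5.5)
Your architecture is the paper's. You restrict to $E_n(A,c,\delta)\cap\{L_n(\varepsilon c_a(n))=0\}$, dominate $F_n$ by the truncated $\bar F_n^*$ using monotonicity of $f_n$, and bound the $(k-2)$-hub term by a constant times $c^{k-2}\varepsilon^{2(\alpha-1)}a\,m_n^{(k)}$. You then take $A,\varepsilon$ small so that the event is empty. The gap is in the step you yourself call the main obstacle: replacing $F$ by the dominating measure does \emph{not} only change constants. Because $\bar F_n^*$ is flat on $[a(n),b(n))$, the dominating measure puts mass $(1+A)(\bar F(a(n))-c/n)\asymp n^{-1+\alpha\delta}$ (up to slowly varying factors) at the single weight $b(n)\approx n^{1/\alpha+\delta}$. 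By contrast, $F$ puts only mass of order $n^{-1-\alpha\delta}$ near $b(n)$. So a medium coordinate is not a perturbation of the $dF$-integrals in Lemmas~\ref{lem:contribution_hubs<k-2}--\ref{lem:contribution_hubs=k-2}. It acts as an extra hub of weight $b(n)$ at cost $n^{-1+\alpha\delta}$, and every configuration containing one needs polynomial slack to absorb an $n^{O(\delta)}$ factor. This is why the paper indexes cases by the number $s$ of coordinates below $a(n)$, rather than by the number of hubs. It lumps medium and large coordinates together via $\bar F(a(n))^m(c/n)^l=O(\bar F(a(n))^{m+l})$.

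Concretely, two inequalities are missing.

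(i) Consider configurations with exactly two coordinates below $a(n)$ and $k-2$ in $[a(n),\varepsilon c_a(n))$, at least one of them medium. This covers your $h=0$ case with $k-2$ medium vertices, and your $h\le k-3$ cases with medium vertices among the ``rest''. Here Lemma~\ref{lem:contribution_hubs<k-2} does not apply. The explicit formula in the proof of Lemma~\ref{lem:contribution_hubs=k-2} shows the two light vertices are throttled by the smallest heavy weight, giving a contribution $n^{O(\delta)}\,n\,(b(n)/n)^{2(\alpha-1)}$. Since $m_n^{(k)}\asymp n(c_a(n)/n)^{2(\alpha-1)}$, this is $o(m_n^{(k)})$ only because $b(n)/c_a(n)\to0$ polynomially, i.e.\ $\alpha^*>1/\alpha$. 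With $u:=k(2-\alpha)/2\in(0,1)$ this reads $\alpha(1-u)<2(\alpha-1)^2$, which for $k\ge 3$ follows from $(2-\alpha)^2>0$. Your heuristic that typical cliques sit at $\sqrt n<a(n)$ does not deliver this.

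(ii) The comparison you defer for a single light coordinate is exactly where $\alpha>2-2/k$ enters. This is your $h\ge k-1$ case and its medium variants, where Lemma~\ref{lem:contribution_hubs>k-2} gives at most $n^{1+O(\delta)}(\varepsilon c_a(n)/n)^\alpha$. One has $1+\alpha(\alpha^*-1)=1-\frac{\alpha(1-u)}{2(\alpha-1)}<u$ if and only if $\alpha<2$, using $1-u>0$.

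With (i) and (ii) in hand, choose $\delta$ below their slack first, then fix $c$, $A$ and $\varepsilon$ in that order. Your argument then closes along the paper's lines.
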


\begin{proof}
We begin by decomposing 
\begin{align}
    &\Prob{C_n > (1+a)m_n^{(k)} \,;\, L_n(\varepsilon c_a(n)) = 0} \nonumber\\ & \qquad \le \Prob{C_n > (1+a)m_n^{(k)} \,;\, L_n(\varepsilon c_a(n)) = 0 \,;\, E_n(A,\delta,c)} + \Prob{E_n(A,\delta,c)^c}. 
\end{align} 
Since $\prob(E_n(A,\delta,c)^c) = o(n^{-\beta})$ whenever $\lceil c \rceil > \beta/(\alpha\delta)$, it suffices to prove that 
\begin{equation}\label{eq:goal_zero} 
\Prob{C_n > (1+a)m_n^{(k)} \,;\, L_n(\varepsilon c_a(n)) = 0 \,;\, E_n(A,\delta,c)} = 0 \end{equation} for sufficiently small $A,\varepsilon>0$. 

 On the event \[ \{L_n(\varepsilon c_a(n)) = 0\} \cap E_n(A,\delta,c), \] the empirical tail satisfies \[ \bar F_n(x) \le \bar F_n^*(x)\,\mathbbm{1}_{\{x < \varepsilon c_a(n)\}}, \] where $\bar F_n^*$ is defined in~\eqref{eq:empdistrbound}. Since $f_n$ is coordinate-wise non-decreasing, the conditional number of $k$-cliques is bounded by \[ C_n \le \binom{n}{k} \int_1^\infty \!\!\cdots\! \int_1^\infty f_n(x_1,\dots,x_k)\, dF_n^*(x_1)\cdots dF_n^*(x_k). \]
 
 For each coordinate $x_i$, split the domain into:
 \[ \text{small } (s): x_i < a(n), \qquad \text{medium } (m): a(n) \le x_i < b(n), \qquad \text{large } (l): b(n) \le x_i < \varepsilon c_a(n). \] 
 Bounding medium coordinates by $b(n)$ and large coordinates by $\varepsilon c_a(n)$, we obtain 
 \begin{multline}\label{eq:splitCn_rewritten} \frac{C_n}{(1+A)^k} \le \sum_{\substack{s,m,l\ge 0 \\ s+m+l = k}} \binom{n}{k}\, \bar F(a(n))^m \left(\frac{c}{n}\right)^l \\ \times \int_0^\infty\!\!\cdots\!\int_0^\infty f_n{ \underbrace{x_1,\dots,x_s}_{s\text{ small}}, \underbrace{b(n),\dots,b(n)}_{m\text{ medium}}, \underbrace{\varepsilon c_a(n),\dots,\varepsilon c_a(n)}_{l\text{ large}} } dF(x_1)\cdots dF(x_s). 
 \end{multline} 
 We now estimate each contribution according to $s$.
 
 \noindent\textbf{Case $s = k$ (all small).} This term is at most $m_n^{(k)}$.
 
 \noindent\textbf{Case $2 < s < k$.} Since $\bar F(a(n)) = O(n^{-1})$, we have \[ \bar F(a(n))^m (c/n)^l = O{\bar F(a(n))^{m+l}}. \] The integral is bounded by $S_{\mathcal B}^{(k-s)}(n)$, so by Lemma~\ref{lem:contribution_hubs<k-2}, \[ \binom{n}{k} \bar F(a(n))^{m+l} S_{\mathcal B}^{(k-s)}(n) = O\!\left(n^k n^{-(m+l)(1-\delta\alpha)} n^{-s\alpha/2}\right) = O\!\left(n^{s(2-\alpha)/2 + \delta\alpha(m+l)}\right). \] Since $s<k$ and $\delta$ can be chosen arbitrarily small, these terms are $o(m_n^{(k)})$.
 
 \noindent\textbf{Case $s = 1$.} Again $\bar F(a(n))^m (c/n)^l = O(\bar F(a(n))^{m+l})$. Lemma~\ref{lem:contribution_hubs>k-2} yields 
 \[ \binom{n}{k} \bar F(a(n))^{k-1} S_{\mathcal B}^{(k-1)}(n) = O\!\left(n^k n^{-(k-1)(1-\delta\alpha)} n^{-\alpha(1-\alpha^*)}\right) = O\!\left(n^{\alpha\delta(k-1) + 1 - \alpha(1-\alpha^*)}\right). 
 \] 
 Since $\alpha\delta(k-1)$ can be made arbitrarily small and 
 \[ 1 - \alpha(1-\alpha^*) \le \frac{k(\alpha-2)}{2} \] 
 for all $k\ge 3$ and $\alpha\in(2-2/k,2)$, these terms are also $o(m_n^{(k)})$. 
 
 \noindent\textbf{Case $s = 2$.} If $m>0$, Lemma~\ref{lem:contribution_hubs=k-2} with $b_1(n)=b(n)$ and $b_{k-1}(n)=\varepsilon c_a(n)$ gives \[ \binom{n}{k} \bar F(a(n))^m (c/n)^l S_{\mathcal B}^{(k-2)}(n) = o\!\left(n \left(\frac{c_a(n)}{n}\right)^{2(\alpha-1)}\right) = o(m_n^{(k)}), \] using Lemma~\ref{lem:ca(n)}. If $l = k-2$, Lemma~\ref{lem:contribution_hubs=k-2} with all large coordinates equal to $\varepsilon c_a(n)$ yields \[ \binom{n}{k} (c/n)^l S_{\mathcal B}^{(k-2)}(n) = c^{k-2}\varepsilon^{2(\alpha-1)} a\, m_n^{(k)}(1+o(1)). \]

 Summing all contributions in~\eqref{eq:splitCn_rewritten}, we obtain, on $ E_n(A,\delta,c) \cap \{L_n(\varepsilon c_a(n)) = 0\}$
 , that
 \[ C_n \le (1+A)^k {1 + c^{k-2}\varepsilon^{2(\alpha-1)} a} m_n^{(k)}. 
 \] 
 For sufficiently small $A,\varepsilon>0$, the right-hand side is strictly less than $(1+a)m_n^{(k)}$, which contradicts the event in~\eqref{eq:goal_zero}. Thus, the probability in~\eqref{eq:goal_zero} is zero, completing the proof.
\end{proof}

Next, we investigate the probability of exceeding $(1+a)m_n^{(k)}$ $k$-cliques when fewer than $k-2$ large hubs of size at least $c_a(n)$ are present.

\begin{proposition}[Fewer than $k-2$ hubs]\label{prop:not_enough_hubs}
Assume that $\alpha > 2 - 2/k$ and let $c_a(n)$ be defined as in \eqref{eq:smallest_hub_size}. Then, for any $1 \leq i < k-2$, there exists $\varepsilon > 0$ such that 
\begin{equation}
    \Prob{C_n > (1+a) m_n^{(k)} ; L_n(\varepsilon c_a(n)) = i} = O(n \Prob{W > c_a(n)})^{k-2}.
\end{equation}
\end{proposition}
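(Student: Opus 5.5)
The plan is to show that, when only $i<k-2$ vertices have weight above $\varepsilon c_a(n)$, the clique count can exceed $(1+a)m_n^{(k)}$ only if a second group of vertices is atypically heavy. Such a group costs at least as much probability as $k-2$ hubs of size $c_a(n)$.

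\textbf{Decomposition.} First I condition on the $i$ vertices with weights $W_{(1)}\ge\dots\ge W_{(i)}\ge \varepsilon c_a(n)$. By exchangeability and independence of the weights,
\[
\Prob{L_n(\varepsilon c_a(n))=i}\le \binom{n}{i}\bar F(\varepsilon c_a(n))^i=\Theta\big(n\Prob{W>c_a(n)}\big)^i .
\]
This is not small enough, so I must gain an extra factor $O(n\Prob{W>c_a(n)})^{k-2-i}$ from the requirement $C_n>(1+a)m_n^{(k)}$. As in the proof of Proposition~\ref{prop:no_hubs}, I work on $E_n(A,\delta,c)$ for the remaining $n-i$ weights, since its complement costs $o(n^{-\beta})$ for arbitrary $\beta$. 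I then split $C_n$ according to how many of a clique's vertices are among the $i$ hubs, say $j\le i$.

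\textbf{Bounding the hub contributions.} Cliques using $j$ hubs contribute about $n^{k-j}S^{(j)}_{\mathcal B}(n)$, with $\mathcal B$ the hub weights and the remaining coordinates distributed as $F_n^*$.
\begin{itemize}
\item Since $j\le i\le k-3$, Lemma~\ref{lem:contribution_hubs<k-2} gives $O(n^{k-j-\alpha(k-j)/2})$. This is $o(m_n^{(k)})$ whenever $j\ge 1$, because $m_n^{(k)}$ has index $k(2-\alpha)/2$. The bound holds uniformly in the hub sizes, because the proof of that lemma simply bounds the hub factors by $1$.
\item Replacing some non-hub coordinates by $b(n)$ or by $\varepsilon c_a(n)$ produces mixed terms. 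These are handled by the same case analysis $s=1$, $2<s<k$, $s=2$ as in Proposition~\ref{prop:no_hubs}. The $s=2$ case with $l=k-2$ large coordinates gives at most $c^{k-2}\varepsilon^{2(\alpha-1)}a\,m_n^{(k)}$.
\end{itemize}
On $E_n(A,\delta,c)$ the total is therefore at most $(1+A)^k(1+c^{k-2}\varepsilon^{2(\alpha-1)}a+o(1))m_n^{(k)}<(1+a)m_n^{(k)}$. This shows the event is essentially impossible on $E_n$, so the probability is $O(n^{-\beta})$ for any $\beta$.

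\textbf{Completing the bound.} To conclude, I only need $n^{-\beta}=O(n\Prob{W>c_a(n)})^{k-2}$. This holds because the right-hand side is regularly varying with a finite index, so I choose $\beta$ larger than the absolute value of that index.

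\textbf{Main obstacle.} The delicate point is $E_n(A,\delta,c)$, specifically its third component. It allows up to $c$ vertices above $b(n)$, and $c$ must grow with $\beta$. Consequently the large-weight count $l$ in the expansion may exceed $k-2$ coordinates drawn from a pool of $c$ vertices. The factor $c^{k-2}\varepsilon^{2(\alpha-1)}$ must then be made small by taking $\varepsilon$ small after fixing $c$. This ordering is consistent: $\beta$ determines $c$, which in turn determines $\varepsilon$.

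It must also be checked that the $i$ explicit hubs are not counted inside $E_n$. This is handled by applying $E_n$ to the $n-i$ remaining weights, which are i.i.d. conditionally on being below $\varepsilon c_a(n)$, so Proposition 2.2 of \cite{stegehuis2023} still applies. If this uniformity fails, the fallback is a crude one: bound $\Prob{L_n(b(n))\ge c}$ directly by $(n\bar F(b(n)))^{c}$, which is $o(n^{-\beta})$.
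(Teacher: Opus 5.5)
Your argument is correct in substance, but it takes a different route from the paper.

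\textbf{The paper's route.} The paper argues by induction on the clique size. On $\{L_n(\varepsilon c_a(n))=i\}$ it pays $(n\Prob{W>c_a(n)})^i$ for the hubs. It then bounds the $k$-cliques through $j$ hubs by the $(k-j)$-clique count $C_n^{(k-j)}$, and treats $j=0$ with Proposition~\ref{prop:no_hubs}. For $j\ge 1$ it invokes the result for $(k-j)$-cliques (base case: triangles, from \cite{stegehuis2023}) with a polynomially growing excess $\tilde a\asymp n^{j(1-\alpha/2)}$. Lemma~\ref{lem:ca(n)} shows that the corresponding hub threshold is again $\Theta(c_a(n))$. Hence the extra cost is $(n\Prob{W>c_a(n)})^{k-j-2}\le (n\Prob{W>c_a(n)})^{k-i-2}$.

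\textbf{Your route.} You stay inside the deterministic bound on $E_n(A,\delta,c)$. Bounding every hub edge factor by $1$ reduces the cliques through $j$ hubs to $(k-j)$-cliques among non-hubs. Rerunning the case analysis of Proposition~\ref{prop:no_hubs} then shows the event is empty on $E_n$, so its probability is $o(n^{-\beta})$ for every $\beta$.

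\textbf{Comparison.} Your argument is self-contained: it needs no induction, no triangle base case, and no appeal to the smaller-clique result with a growing $a$. It also gives a stronger bound. The price is that $\varepsilon$ must depend on the target exponent through $c$, which the statement allows. Your ordering $\beta\to c\to\varepsilon$, with $\delta$ fixed small first, is the right one. Your first step of paying $(n\Prob{W>c_a(n)})^i$ is never used and can be dropped.

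\textbf{A correction to your bookkeeping.} Consider the terms with $j\ge 1$ hubs, exactly two small vertices, and the remaining $k-2-j$ non-hubs large. These are not $o(m_n^{(k)})$. They are also not literally the $l=k-2$ case of Proposition~\ref{prop:no_hubs}, since that configuration cannot occur when a hub is present. After bounding the hub factors by $1$, apply Lemma~\ref{lem:contribution_hubs=k-2} to $(k-j)$-cliques. Because $j\le k-3$, this gives $\frac{1}{n}\big(\varepsilon c_a(n)/n\big)^{2(\alpha-1)}$, a scaling independent of the clique size. These terms therefore contribute $C_{c,i,k}\,\varepsilon^{2(\alpha-1)}a\,m_n^{(k)}$. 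Your final bound should read
\[
(1+A)^k\big(1+C_{c,i,k}\,\varepsilon^{2(\alpha-1)}a+o(1)\big)m_n^{(k)}.
\]
The conclusion is unaffected, since $\varepsilon$ is chosen after $c$.

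\textbf{The conditioning is unnecessary.} You do not need to condition on the $n-i$ remaining weights or use the fallback. Intersecting with $E_n(A,\delta,c)$ for all $n$ weights suffices. The hubs simply count among the at most $(1+A)c$ vertices above $b(n)$. The non-hub empirical measure is dominated by the full one restricted below $\varepsilon c_a(n)$.
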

\begin{proof}
Since $L_n(\varepsilon c_a(n))$ is binomial with parameters $n$ and $p_n := \Prob{W > \varepsilon c_a(n)}$ and $p_n = o(1/n)$, we have, for each fixed $i \in \mathbb{N}$, 
\[ \Prob{L_n(\varepsilon c_a(n)) = i} =  (1+o(1)) {( n \Prob{W > \varepsilon c_a(n)} })^i. \] 
Hence it suffices to prove that, for every fixed $i$ with $1 \le i < k-2$, \begin{equation}\label{eq:induction_step} \Prob{ C_n > (1+a) m_n^{(k)} \,\big|\, L_n(\varepsilon c_a(n)) = i } = O{( {( n \Prob{W > c_a(n)} })^{k-i-2} }). \end{equation}
Condition on the event $\{L_n(\varepsilon c_a(n)) = i\}$. For any $k$-clique that contains a certain number $j \in \{0,\dots,i\}$ of these hubs, the remaining $k-j$ vertices must form a $(k-j)$-clique among the non-hub vertices. Let $C_n^{(k-j)}$ denote the total number of $(k-j)$-cliques in the graph. Then, on $\{L_n(\varepsilon c_a(n)) = i\}$, the total number of $k$-cliques satisfies
\[ C_n \le \sum_{j=0}^{i} C_n^{(k-j)}. \] Fix $\bar a := a/(i+1)$. If $C_n > (1+a)m_n^{(k)}$, then at least one of the $i+1$ terms in the above sum must exceed its mean by at least $\bar a m_n^{(k)}$. More precisely, 
\begin{align}\label{eq:uboundkjcliques} &\Prob{ C_n > (1+a) m_n^{(k)} \,\big|\, L_n(\varepsilon c_a(n)) = i} \\ &\qquad \le \Prob{ C_n > (1+a) m_n^{(k)} \,\big|\, L_n(\varepsilon c_a(n)) = 0} \nonumber \\ &\qquad\qquad + \sum_{j=1}^i \Prob{ C_n^{(k-j)} > \bar a m_n^{(k)} }.\nonumber 
\end{align}
By Proposition~\ref{prop:no_hubs}, we can choose $\varepsilon>0$ such that
\[ \Prob{ C_n > (1+\bar a) m_n^{(k)} \,\big|\, L_n(\varepsilon c_a(n)) = 0} \] decays faster than any fixed polynomial in $n$. In particular, this term is $O{( (n \Prob{W > c_a(n)})^{k-i-2} })$, since $n \Prob{W > c_a(n)} \to 0$ at a polynomial rate under our assumptions.

We now control the remaining terms in~\eqref{eq:uboundkjcliques} by induction on the clique size. From~\cite{stegehuis2023}, the statement of Proposition~\ref{thm:LDcliques} is known for $k=3$ (triangles), which serves as the base case. Fix $k\ge 4$ and assume that Proposition~\ref{thm:LDcliques} holds for all clique sizes $3,\dots,k-1$. Then, for any $j \in \{1,\dots,k-2\}$ and any $\tilde a>0$, 
\begin{equation}\label{eq:induction_assumption} \Prob{ C_n^{(k-j)} > (1+\tilde a) m_n^{(k-j)} } = O{( {( n \Prob{W > c^{(k-j)}_{\tilde a}(n)} })^{k-j-2} }), \end{equation} 
where $m_n^{(k-j)}$ is the expected number of $(k-j)$-cliques, and $c^{(k-j)}_{\tilde a}(n)$ is the threshold (as in~\eqref{eq:smallest_hub_size}) corresponding to having an expected excess of $\tilde a m_n^{(k-j)}$ $(k-j)$-cliques. Fix $j \in \{1,\dots,i\}$. We choose $\tilde a = \tilde a(j,n)$ such that 
\begin{equation}\label{eq:tilde_a_choice} (1+\tilde a) m_n^{(k-j)} = \bar a m_n^{(k)}. \end{equation} 
By Lemma~\ref{lem:averagecliquenumber}, we have $m_n^{(k-j)} = o(m_n^{(k)})$, so~\eqref{eq:tilde_a_choice} implies \[ \tilde a = \frac{\bar a m_n^{(k)}}{m_n^{(k-j)}} - 1 = (1+o(1)) \frac{\bar a m_n^{(k)}}{m_n^{(k-j)}} = \Theta{( n^{j(1-\alpha/2)} }). \] Applying Lemma~\ref{lem:ca(n)} to $(k-j)$-cliques, we obtain \[ c^{(k-j)}_{\tilde a}(n) = \Theta\!\left( \tilde a^{\frac{1}{2(\alpha-1)}} n^{1 - \frac{2 - (k-j)(2-\alpha)}{4(\alpha-1)}} \right) = \Theta\!\left( n^{1 - \frac{2-k(2-\alpha)}{4(\alpha-1)}}\right) = \Theta{( c_a(n) }). \] Hence $\Prob{W > c^{(k-j)}_{\tilde a}(n)}$ is of the same order as $\Prob{W > c_a(n)}$, and by~\eqref{eq:induction_assumption}, 
\begin{align*} \Prob{ C_n^{(k-j)} > \bar a m_n^{(k)} } &= \Prob{ C_n^{(k-j)} > (1+\tilde a) m_n^{(k-j)} } \\ &= O{( {( n \Prob{W > c^{(k-j)}_{\tilde a}(n)} })^{k-j-2} }) \\ &= O{( {( n \Prob{W > c_a(n)} })^{k-j-2} }). \end{align*} 
Since $j \le i$ and $n \Prob{W > c_a(n)} \to 0$, we have \[ {( n \Prob{W > c_a(n)} })^{k-j-2} \le {( n \Prob{W > c_a(n)} })^{k-i-2}, \] so, for each $j=1,\dots,i$, \[ \Prob{ C_n^{(k-j)} > \bar a m_n^{(k)} } = O{( {( n \Prob{W > c_a(n)} })^{k-i-2} }). \] Summing over $j=1,\dots,i$ and combining with~\eqref{eq:uboundkjcliques}, we obtain \[ \Prob{ C_n > (1+a) m_n^{(k)} \,\big|\, L_n(\varepsilon c_a(n)) = i} = O{( {( n \Prob{W > c_a(n)} })^{k-i-2} }), \] which is exactly~\eqref{eq:induction_step}. This completes the induction and the proof of the proposition.
\end{proof}

\begin{proof}[Proof of Proposition \ref{prop:LDconditionalcliques}]

We decompose \[ \Prob{C_n > (1+a)m_n^{(k)}} = \sum_{i=0}^{n} \Prob{ C_n > (1+a)m_n^{(k)} \,;\, L_n(\varepsilon c_a(n)) = i }, \] and estimate each term separately. 

\smallskip \noindent\textbf{Case $i=0$.} By Proposition~\ref{prop:no_hubs}, \[ \Prob{ C_n > (1+a)m_n^{(k)} \,;\, L_n(\varepsilon c_a(n)) = 0 } \] decays faster than any fixed polynomial in $n$. 

\smallskip \noindent\textbf{Case $1 \le i < k-2$.} Proposition~\ref{prop:not_enough_hubs} gives 
\[ \Prob{ C_n > (1+a)m_n^{(k)} \,;\, L_n(\varepsilon c_a(n)) = i } = O\!\left( {( n \Prob{W > c_a(n)} })^{k-2} \right). \] 
\smallskip \noindent\textbf{Case $i = k-2$.} We use the trivial bounds \begin{align}\label{eq:bounds_k-2} &\Prob{ C_n > (1+a)m_n^{(k)} \,;\, L_n(c_a(n)) = k-2 } \nonumber \\ & \quad \le \Prob{ C_n > (1+a)m_n^{(k)} \,;\, L_n(\varepsilon c_a(n)) = k-2 } \\ & \qquad \le \Prob{ L_n(\varepsilon c_a(n)) = k-2 }. \nonumber \end{align} 
For the lower bound in~\eqref{eq:bounds_k-2}, Lemma~\ref{lem:contribution_hubs=k-2} applied with $b(n)=c_a(n)$ shows that the event 
$\{L_n(c_a(n)) = k-2\} $
already produces at least $(1+a)m_n^{(k)}$ cliques with probability asymptotically of order \[ (n \Prob{W > c_a(n)})^{k-2}. \] 
Thus, the left-hand side of~\eqref{eq:bounds_k-2} is asymptotically bounded from below by a constant multiple of$ (n \Prob{W > c_a(n)})^{k-2}. $ 
For the upper bound, note that $L_n(\varepsilon c_a(n))$ is binomial with parameters $n$ and \[ p_n := \prob(W > \varepsilon c_a(n)). \] $\Prob{W > c_a(n)}$ is regularly varying and \[ \prob(W > \varepsilon c_a(n)) = (1+o(1)) \prob(W > c_a(n)), \] because $\varepsilon>0$ is fixed. Therefore, 
    \[ \Prob{L_n(\varepsilon c_a(n)) = k-2}  = \Theta\!\left( (n \Prob{W > c_a(n)})^{k-2} \right). \]  Combining the upper and lower bounds in~\eqref{eq:bounds_k-2}, we conclude that \[ \Prob{ C_n > (1+a)m_n^{(k)} \,;\, L_n(\varepsilon c_a(n)) = k-2 } = \Theta\!\left( (n \Prob{W > c_a(n)})^{k-2} \right). \] 
\smallskip \noindent\textbf{Case $i > k-2$.} Since $L_n(\varepsilon c_a(n))$ is binomial with mean $n\prob(W > \varepsilon c_a(n)) = o(1)$, \[ \Prob{ L_n(\varepsilon c_a(n)) \ge k-3 } = O\!\left( (n \Prob{W > c_a(n)})^{k-3} \right). \] Thus, \[ \Prob{( C_n > (1+a)m_n^{(k)} \,;\, L_n(\varepsilon c_a(n)) > k-2 }) \le O\!\left( (n \Prob{W > c_a(n)})^{k-3} \right). \] 
\smallskip Summing all contributions, the dominant term arises from $i = k-2$, and the proposition follows. 
\end{proof}

\section{LD for number of cliques (proof of Theorem~\ref{thm:LDcliques})}\label{sec:proofcliques}
We now show that the probability that the number of cliques deviates from its expectation conditionally on the vertex weights decays exponentially fast. This relies on the fact that any $k$ clique contains $k-2$ triangles sharing an edge. To do so, we first show that the probability that a single edge between two vertices with product of their weights $W_iW_j<\mu n$, appears in many $k$-cliques is small:
\begin{lemma}\label{lem:kcliqueshare}
Suppose that $1<\alpha<2$. Then, when $L>n^{(2-\alpha)(k-2)/2+\varepsilon}$ for some $\varepsilon>0$,
    \begin{equation}
        \Prob{\{i,j\}\text{ appears in $\geq L$ $k$-cliques}\mid W_i W_j<\mu n}\leq \exp(-c_1 L^{1/(k-2)}),
    \end{equation}
    for some $c_1>0$.
\end{lemma}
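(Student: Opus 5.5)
The plan is to exploit the structure mentioned in the discussion: any $k$-clique containing the edge $\{i,j\}$ contains $k-2$ further vertices $v_1,\dots,v_{k-2}$, each of which is a common neighbour of $i$ and $j$. Hence, if $T_{ij}$ denotes the number of common neighbours of $i$ and $j$, the number of $k$-cliques containing $\{i,j\}$ is at most $\binom{T_{ij}}{k-2}\le T_{ij}^{k-2}$. Therefore
\[
\Prob{\{i,j\}\text{ in }\geq L\ k\text{-cliques}\mid W_iW_j<\mu n}\le \Prob{T_{ij}\ge L^{1/(k-2)}\mid W_iW_j<\mu n},
\]
and it suffices to prove an exponential tail bound for $T_{ij}$ at level $t:=L^{1/(k-2)}>n^{(2-\alpha)/2+\varepsilon'}$ with $\varepsilon'=\varepsilon/(k-2)$.

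Conditionally on $W_i,W_j$ and on the edge $\{i,j\}$, the indicators $\mathbbm{1}\{v\sim i, v\sim j\}$ for $v\notin\{i,j\}$ are independent: each $v$ has an independent weight $W_v$, and its edges to $i$ and $j$ are independent given the weights. So $T_{ij}$ is a sum of $n-2$ i.i.d. Bernoulli variables with success probability
\[
q=\E{p_{iv}p_{jv}\mid W_i,W_j}=O\Big(\E{\min\big(\tfrac{W_iW}{\mu n},1\big)\min\big(\tfrac{W_jW}{\mu n},1\big)}\Big).
\]
Write $w_1=W_i\le w_2=W_j$ with $w_1w_2<\mu n$. Splitting the integral at $\mu n/w_2$ and $\mu n/w_1$, exactly as in the proof of Lemma~\ref{lem:contribution_hubs=k-2}, gives
\[
nq=O\Big(\tfrac{w_1w_2}{n}\,(n/w_2)^{2-\alpha}+w_1 (w_2/n)^{\alpha-1}\Big).
\]
The key point is to check that this is $O(n^{(2-\alpha)/2})$ uniformly over $w_1w_2<\mu n$. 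The first term equals $w_1w_2^{\alpha-1}n^{1-\alpha}$. Maximizing under $w_1\le w_2$ and $w_1w_2\le\mu n$ puts $w_1=\mu n/w_2$, which gives $w_2^{\alpha-2}n^{2-\alpha}$. This is largest when $w_2$ is smallest, namely $w_2=\sqrt{\mu n}$, giving order $n^{(2-\alpha)/2}$. The second term is handled the same way. The weights are also bounded by $w\le n$ w.h.p., but this is not even needed.

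Finally, apply a Chernoff bound for a binomial with mean $\lambda:=nq=O(n^{(2-\alpha)/2})$ at level $t\ge n^{\varepsilon'}\lambda$. The bound $\Prob{\mathrm{Bin}\ge t}\le (e\lambda/t)^t$ gives at most $\exp(-t(\varepsilon'\log n-1))\le\exp(-t)$ for $n$ large. This yields the claim with, for instance, $c_1=1$.

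The main obstacle is the uniform bound on $nq$ over the whole region $W_iW_j<\mu n$, since the conditioning only restricts the product of the weights. In particular, the boundary case $w_1\approx w_2\approx\sqrt n$ must give exactly the exponent $(2-\alpha)/2$, and the piecewise integration has to be done carefully to confirm that no logarithmic factor exceeds the $n^{\varepsilon}$ slack. Only an upper bound of order $\Theta$ on $p_{ij}$ is needed, so the constants in the connection probability are harmless.
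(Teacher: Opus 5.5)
Your proof is correct and follows the same route as the paper. You bound the number of $k$-cliques on $\{i,j\}$ by $\binom{T_{ij}}{k-2}\le T_{ij}^{k-2}$, which reduces the event to having at least $L^{1/(k-2)}>n^{(2-\alpha)/2+\varepsilon/(k-2)}$ triangles on a low-weight edge, and then you use an exponential tail for that triangle count; the paper cites Lemma 4.4 of \cite{stegehuis2023} for this last step, whereas you prove it directly via the binomial Chernoff bound together with the uniform estimate $w_1w_2^{\alpha-1}n^{1-\alpha}=(w_1w_2)^{\alpha-1}w_1^{2-\alpha}n^{1-\alpha}\le \mu^{\alpha/2}n^{(2-\alpha)/2}$, which also covers the case $w_2<\sqrt{\mu n}$ that you skipped in the maximization.
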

\begin{proof}
First of all, when an edge $\{i,j\}$ appears in $L$ $k$-cliques, it has to appear in at least $L^{1/(k-2)}$ triangles. Indeed, any clique containing the edge $\{i,j\}$ contains $k-2$ triangles containing $\{i,j\}$. Therefore, 
    \begin{align}
        &\Prob{\{i,j\}\text{ appears in }\geq L\quad  k-\text{cliques}\mid W_iW_j<\mu n}\nonumber\\
        &\leq \Prob{\{i,j\}\text{ appears in }\geq L^{1/(k-2)}\quad  \text{triangles}\mid W_iW_j<\mu n}\nonumber\\
        & \leq \exp(-c_1 L^{1/(k-2)}),
    \end{align}
    where the last step follows from~\cite[Lemma 4.4]{stegehuis2023}, which only holds for $L>n^{(2-\alpha)(k-2)/2+\varepsilon}$.
\end{proof}

Lemma~\ref{lem:kcliqueshare} only bounds the number of shared edges in cliques for edges with incident vertices of relatively low degree. To bound the probability that the number of $k$-cliques is large, we also need to deal with overlapping cliques containing high-degree vertices. To do so, we define the event $E_w = \{W_1=w_1,...,W_n=w_n\}$ and we set $g_n= \sum_{i_1<i_2<\dots<i_k} f_n(w_1,w_2,\dots,w_k)$ to be the expected number of $k$-cliques conditional on the weights. Let $K_n^k(w)$ be the random variable describing the number of $k$-cliques conditionally on $E_w$.
Finally,we  set for $\zeta>0$,
\begin{equation}
    \label{eq-def-J}
    J(\eta) = (1+\zeta) \log (\zeta + 1/(1+\zeta)) /3.
\end{equation}
We now show that the probability that $K^k_n(w)$ is larger than $g_n$ is small:
\begin{lemma}
\label{lem-conditionedchatterjeetriangles} 
There exists an $\varepsilon>0$ such that, for $\zeta>0$, and all $w=(w_1,...,w_n)$:
\begin{equation}
     \Prob{K^k_n(w) >(1+\zeta )g_n} \leq e^{-J(\zeta) g_n/n^{(2-\alpha)(k-2)/2+\delta}}+ e^{-n^\varepsilon}.
\end{equation}
  \end{lemma}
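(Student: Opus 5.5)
We follow the Chatterjee-type argument for upper tails of sums of weakly dependent indicators, as used for triangles in \cite{stegehuis2023}. Conditionally on $E_w$, write $K^k_n(w)=\sum_{\mathbf{i}} I_{\mathbf{i}}$, where $\mathbf{i}$ runs over $k$-subsets of $[n]$ and $I_{\mathbf{i}}$ indicates that $\mathbf{i}$ spans a clique. Then $\mathbb{E}[K^k_n(w)]=g_n$ up to the constant in $p_{ij}$. The tool is the exponential concentration inequality for self-bounding functions, or equivalently Chatterjee's exchangeable-pair bound: if $X=\sum I_{\mathbf{i}}$ and, whenever one clique is removed, at most $M$ other cliques can be affected, then
\[
\Prob{X\ge (1+\zeta)\mathbb{E}X}\le \exp\left(-J(\zeta)\,\mathbb{E}X/M\right),
\]
with $J$ as in~\eqref{eq-def-J}. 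So the task reduces to controlling $M$, the maximal number of $k$-cliques sharing an edge with a given clique. Two cliques are dependent only if they share an edge, so the number of cliques dependent on a given one is at most $\binom{k}{2}$ times the maximal number of $k$-cliques through a single edge.

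\textbf{Step 1: truncate.} Let $\mathcal{G}$ be the good event that every edge $\{i,j\}$ with $w_iw_j<\mu n$ lies in at most $L:=n^{(2-\alpha)(k-2)/2+\delta}$ $k$-cliques. Lemma~\ref{lem:kcliqueshare} with this $L$ and a union bound over at most $n^2$ pairs give
\[
\Prob{\mathcal{G}^c}\le n^2\exp\left(-c_1 n^{((2-\alpha)(k-2)/2+\delta)/(k-2)}\right)\le e^{-n^{\varepsilon}}
\]
for small $\varepsilon>0$. This produces the second term in the claim.

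\textbf{Step 2: high-weight edges.} Edges with $w_iw_j\ge\mu n$ are present with probability bounded below. Lemma~\ref{lem:kcliqueshare} does not apply to them, so I would treat them separately. Cliques through such an edge contain at least one vertex of weight at least $\sqrt{\mu n}$. The plan is to condition on the edge set among high-weight vertices, of which there are only $O_{\prob}(n^{1-\alpha/2})$. Given that edge set, the remaining randomness concerns only edges with $w_iw_j<\mu n$. The dependence count is then bounded through those edges alone: any clique contains $k-2$ edges incident to low-product pairs, apart from a bounded number of hub-hub pairs. The conditioning on the hub-hub edges only shifts the conditional mean, and that shift is absorbed by using the worst case over this finite edge configuration.

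\textbf{Step 3: apply the inequality to the truncated variable.} Define $\tilde X=\sum_{\mathbf{i}} I_{\mathbf{i}}\mathbbm{1}\{\text{every low-product edge of }\mathbf{i}\text{ lies in}\le L\text{ cliques}\}$. This is a monotone truncation, so $\mathbb{E}\tilde X\le g_n$. The quantity $\tilde X$ is self-bounding with constant $M=O(L)$. Chatterjee's inequality then gives $\Prob{\tilde X\ge(1+\zeta)g_n}\le e^{-J(\zeta)g_n/O(L)}$, and the constant can be absorbed by adjusting $\delta$. On $\mathcal{G}$ we have $\tilde X=K^k_n(w)$, which gives the claim.

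I expect Step 2 to be the main obstacle. The difficulty is making the truncated count genuinely satisfy the Lipschitz/self-bounding condition while hub-hub edges are conditioned on. The first thing to try is to apply the inequality conditionally on the hub-hub edge configuration and average over it afterward.
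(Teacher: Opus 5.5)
Your Step 1 matches the paper's event $\mathcal{E}'$. Step 2, however, has a genuine gap, and so does the claim in Step 3 that each clique depends on only $M=O(L)$ others.

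You treat edges with $w_iw_j\ge\mu n$ as random, present only with probability bounded below, and try to neutralize them by conditioning on the edges among vertices of weight at least $\sqrt{\mu n}$. This fails for three reasons.
\begin{enumerate}[(i)]
\item High-product pairs are not confined to those vertices. A vertex of weight $n^{0.9}$ forms a high-product pair with every vertex of weight above $\mu n^{0.1}$. So random edges lying in far more than $L$ cliques survive your conditioning, and Lemma~\ref{lem:kcliqueshare} does not control them.
\item The configuration you condition on is not finite.
\item Conditioning on the presence of edges of probability $p\in(0,1)$ raises the conditional mean by a constant factor. That cannot be absorbed into $(1+\zeta)g_n$ for small $\zeta$.
\end{enumerate}

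In fact, with $p\in(0,1)$ on these edges the lemma is false. Take $k=3$, two vertices of weight $n^{0.9}$, and all other weights equal to $1$. The triangles through the edge joining the two heavy vertices form a constant fraction of $g_n=\Theta(n^{0.8})$. Hence $K^3_n(w)/g_n$ has a non-degenerate two-point limit, and the left-hand side stays bounded away from $0$. Meanwhile $g_n/n^{(2-\alpha)/2+\delta}\to\infty$, so the right-hand side tends to $0$. For the same reason, you need $\mathbb{E}[K^k_n(w)]=g_n$ exactly, not ``up to the constant in $p_{ij}$''.

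The missing idea, which the paper's proof uses, is that for $w_uw_v\ge\mu n$ the connection probability saturates at $\min(w_uw_v/(\mu n),1)=1$. These edges are therefore deterministically present given $w$ and create no dependence. Two cliques are dependent only if they share an edge with $w_uw_v<\mu n$.

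The paper encodes this with decoupled indicators $X_{\mathbf{i}(\mathbf{j})}$. Each equals $X_{\mathbf{i}}$ unless $\mathbf{i}$ and $\mathbf{j}$ share such a low-product edge, and is $0$ otherwise. Then $\sum_{\mathbf{i}}X_{\mathbf{i}(\mathbf{j})}$ is independent of $X_{\mathbf{j}}$. On $\mathcal{E}'$ it differs from $k!K^k_n(w)$ by at most $a=\bigl(\binom{k}{2}+\dots+\binom{k}{k}\bigr)L$. Chatterjee's Theorem 3.1 with $t=(1+\zeta)g_n$ and $\lambda=g_n$ then gives the first term. With this observation your Step 2 becomes unnecessary.

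Separately, your $\tilde X$ is not monotone in the edge indicators. Adding one edge can push another low-product edge above $L$ cliques and switch off all cliques through it. So the self-bounding property with constant $O(L)$ is not automatic. The coupling formulation avoids having to verify such a functional Lipschitz condition.
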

  
\begin{proof}[Proof of Lemma \ref{lem-conditionedchatterjeetriangles}]
By Lemma~\ref{lem:kcliqueshare}, when $w_iw_j<\mu n$, and choosing $L=n^{(2-\alpha)(k-2)/2+\delta}$, 

\begin{equation}\label{eq:ijtriangles}
	\Prob{ \{i,j\} \text{ in }\geq n^{(2-\alpha)(k-2)/2+\delta}\quad k \text{-cliques}}\leq \exp(-K_1 n^{(2-\alpha)/2+\delta/(k-2)}),
\end{equation} 
for some $K_1>0.$
This indicates that with probability at least $1-n^2\exp(-K_1 n^{(2-\alpha)/2+\delta/(k-2)}) \geq 1- \exp(-n^\varepsilon)$, all edges between vertices of weights $w_iw_j<\mu n$ appear in at most $n^{(2-\alpha)(k-2)}$ $k$-cliques for some $\varepsilon>0$.
We now work on this event, which we call $\mathcal{E}'$.

We set $X_{i_1,\dots,i_k}=X'_{i_1,\dots,i_k}$ as the indicator that a $k$-clique is present on vertices $i_1,\dots,i_k$. Furthermore, we set $X_{i_1,\dots,i_k(j_1,\dots,j_k)}=X_{i_1,\dots,i_k}$ when $|\{i_1,\dots,i_k,j_1,\dots,j_k\}|\geq 2k-2$. This corresponds to the setting in which two cliques overlap at a vertex or do not overlap at all. When $|\{i_1,\dots,i_k,j_1,\dots,j_k\}|< 2k-2$, we set $X_{i_1,\dots,i_k(j_1,\dots,j_k)}=X_{i_1,\dots,i_k}$ when the overlap of $i_1,\dots,i_k$ and $j_1,\dots,j_k$ occurs only at edges with $w_uw_v>\mu n$, and we set $X_{i_1,\dots,i_k(j_1,\dots,j_k)}= 0$ otherwise. Then, $\sum_{i_1,\dots,i_k}X_{i_1,\dots,i_k(j_1,\dots,j_k)}$ and $X_{j_1,\dots,j_k}$ are independent. Indeed, when two $k$-cliques do not overlap, or only overlap at a single vertex, their presence is independent, conditionally on the weights, as the edge indicators are independent conditionally on the weights. When the edge overlap(s) occur only at edges that are present with probability one, the presence of the two $k$-cliques is also independent conditionally on the weights. In all other cases, $X_{i_1,\dots,i_k(j_1,\dots,j_k)}=0$, which is also independent of $X_{j_1,\dots,j_k}$. 

We now bound the number of possible $X_{i_1,\dots,i_k(j_1,\dots,j_k)}$ that are manually put to zero.
There are $\binom{k}{l}$ possible sets of $l$ vertices of the $k$-cliques that can be part of an $l$-overlap with $j_1,\dots,j_k$. As there is at least one edge with $w_uw_v<\mu n$ when $X_{i_1,\dots,i_k(j_1,\dots,j_k)}$ that is manually put to zero, there can be at most $n^{(3-\tau)/2+\delta/(k-2)}$ cliques that overlap with the $l$-set of vertices.

Summing over $l$, on the event $\mathcal{E}'$,
\begin{align}
	k! K_n^k(w) &=\sum_{i_1,\dots,i_k}X_{i_1,\dots,i_k} \nonumber \\ &\leq \sum_{i_1,\dots,i_k}X_{i_1,\dots,i_k(j_1,\dots,j_k)}+\Big( {\binom{k}{2}}+ {\binom{k}{3}}+\dots + {\binom{k}{k}}\Big)n^{(2-\alpha)(k-2)/2+\delta}.
\end{align}
 Then, by~\cite[Lemma Theorem 3.1]{chatterjee2012} with $a=\Big( {\binom{k}{2}}+ {\binom{k}{3}}+\dots + {\binom{k}{k}}\Big)n^{(2-\alpha)(k-2)/2+\delta}$, $t=(1+\zeta)g_n$ and $\lambda=g_n$ finishes the proof. 
\end{proof}


\begin{proof}[Proof of Theorem \ref{thm:LDcliques}]
We begin with an asymptotic upper bound. Using Lemma \ref{lem-conditionedchatterjeetriangles}, for $\zeta > 0$
\begin{align}
    \Prob{\mathcal{K}_n^{(k)} > (1 + a)m_n^{(k)}} &\leq \Prob{\mathcal{K}_n^{(k)} \geq (1 + \zeta)C_n} +  \Prob{C_n/(1+\zeta) \geq \mathcal{K}_n^{(k)} \geq (1 + a)m_n^{(k)}} \nonumber\\
    &\leq e^{-n^{\varepsilon}} + \Prob{C_n \geq (1 + \zeta)(1 + a)m_n^{(k)}}\nonumber\\
    & \leq H\left(n \Prob{W > c_a(n)}\right)^{k-2},
\end{align}
for some constant $H$, where the last line follows from Proposition~\ref{prop:LDconditionalcliques}.

To obtain an asymptotic lower bound, we condition on the event that $k-2$ vertices have sufficiently large weights. In particular, let $\mathcal{K}_n^{(k)}(\delta, a)$ be the number of $k$-cliques in IRG, such that: 
\begin{itemize}
    \item either all nodes have weights at most $n^{1/2+\delta}$,
    \item or, two nodes have weights at most $n^{1/2+\delta}$ and the rest have weights at least $c_a(n)$.
\end{itemize}
Then,
\begin{equation}
    \Prob{\mathcal{K}_n^{(k)} \geq (1+a)m_n^{(k)}} \geq (n \Prob{W > c_a(n)})^{k-2} \Prob{\mathcal{K}_n^{(k)}(\delta,a) \geq (1+a)m_n^{(k)}}.
\end{equation}
From here, it is sufficient to show that $\mathcal{K}_n^{(k)}(\delta,a) / m_n^{(k)}$ converges in probability to $(1+a)$. To this end, we employ a second-moment method. First, observe that 
\begin{align}
    \E{\mathcal{K}_n^{(k)}(\delta,a)} &= \binom{n-(k-2)}{k} \int_{0}^{n^{1/2 + \delta}} \cdots \int_{0}^{n^{1/2 + \delta}} f_n(x_1,...,x_k) \dd F(x_1) \cdots \dd F(x_k) \label{eq:Kda-1} \\
    &\hspace{-1cm}+ \frac{1}{2}(n-(k-2))^2 \int_{0}^{n^{1/2 + \delta}} \int_{0}^{n^{1/2 + \delta}} f_n(x_1,x_2, c_a(n),...,c_a(n)) \dd F(x_1) \dd F(x_2) \label{eq:Kda-a}
\end{align}
From the proof of Lemma \ref{lem:averagecliquenumber}, the right-hand side term in \eqref{eq:Kda-1} asymptotically equals $m_n^{(k)}$. Instead, from Lemma \ref{lem:contribution_hubs=k-2}, it follows by taking $b(n) = c_a(n)$ that the term in \eqref{eq:Kda-a} is asymptotically equal to $am_n^{(k)}$. Summing up, $\E{\mathcal{K}_n^{(k)}(\delta,a)} = (1+o(1))(1+a) m_n^{(k)}$. Second, we investigate the concentration properties of $\mathcal{K}_n^{(k)}(\delta,a)$. We can write 
\begin{align}
    \mathcal{K}_n^{(k)}(\delta,a) &= \binom{n-(k-2)}{k} \int_{0}^{n^{1/2 + \delta}} \cdots \int_{0}^{n^{1/2 + \delta}} f_n(x_1,...,x_k) \dd F_{n}(x_1) \cdots \dd F_n(x_k) \\
    &+ (n-(k-2))^2 \int_{0}^{n^{1/2 + \delta}} \int_{0}^{n^{1/2 + \delta}} f_n(x_1,x_2, c_a(n),...,c_a(n)) \dd F_n(x_1) \dd F_n(x_2)
\end{align}
where we recall that $F_n$ denotes the empirical distribution function of weights. The distribution function $F_n$ concentrates around $F$, that is, for any $\zeta$ and $\delta$ there exists $\varepsilon$ such that (cfr. Proposition 2.2 in \cite{stegehuis2023})
\begin{equation}
    \Prob{\sup_{x < n^{1/2 + \delta}} \left|\frac{\bar{F}_n(x)}{\bar{F}(x)}  - 1 \right| > \zeta} \leq e^{-n^{\varepsilon}}.
\end{equation}
On this event, $\mathcal{K}_n^{(k)}(\delta,a) \leq (1 + \zeta)^k \E{\mathcal{K}_n^{(k)}(\delta,a)} = (1+o(1))(1 + \zeta)^k(1+a)m_n^{(k)}$, otherwise, we trivially bound $\mathcal{K}_n^{(k)}(\delta,a) \leq n^k$. Thus, 
\begin{equation}
    \E{(\mathcal{K}_n^{(k)}(\delta,a))^2} \leq (1+o(1)) ((1+\zeta)^{k}(1+a)m_n^{(k)})^2 + n^{2k} e^{-n^\varepsilon} = O((1+\zeta)^k(1+a)m_n^{(k)})^2
\end{equation}
Then, by Chebyshev's inequality, for any $\delta$
\begin{equation}
    \limsup_{n \to \infty} \Prob{\left| \frac{\mathcal{K}_n^{(k)}(\delta,a)}{\E{\mathcal{K}_n^{(k)}(\delta,a)}} - 1 \right| > \delta} \leq \frac{O(1+\zeta)^{2k} - 1}{\delta^2}
\end{equation}
and the proof is concluded by taking the limit $\delta \to 0$.

\end{proof}

\section{Proofs for general subgraph occurrence (Theorem~\ref{thm:subgraphdeviation})}
\label{sec:proofsubgraphs}
We now investigate the impact of non-typical large hubs on subgraph counts in IRG. The appearance of a hub of size $n^{\beta}$ with $\beta> 1/\alpha$ is a rare event. Indeed, the probability of a vertex weight to be of the order $n^{\beta}$ is proportional to $n^{\alpha\beta}$, hence the expected number of vertices with such weights is $\Theta(n^{1 - \beta\alpha}) \ll 1$. 

By~\eqref{eq:convergencesubgraphs},  $\Prob{N(H) \leq n^{B(H) + \delta}} \to 1$, for any $\delta > 0$. Thus, $N(H)$ concentrates around $n^{B(H)}$. Still, $N(H)$ can be large when extra-large hubs appear in IRG. To formalize this, we investigate the extra contribution to the expected value $\E{N(H)}$ provided by extra-large hubs. Formally, let $\boldsymbol{\beta} = (\beta_1,...,\beta_k) \in [0,1]^k$ be a sequence where some entries might have value larger than $1/\alpha$, corresponding to extra-large hubs. 
\begin{proof}[Proof of Theorem~\ref{thm:subgraphdeviation}]
\textbf{Lower bound.}
Recall that
\begin{align}
R(H):=\max_{\beta} & \sum_{i \in [k]} \min \left(1 - \alpha\beta_{i},0\right) \label{eq:opt_poly_obj2} \\
\text { s.t. } & \sum_{i \in V_{H}} \max \left(1 - \alpha\beta_{i},0\right)+\sum_{\{i, j\} \in E_{H}} \min \left(\beta_{i}+\beta_{j}-1,0\right) \geq \gamma .\label{eq:opt_poly_constr2}
\end{align}
Let $\boldsymbol{\beta}^*$ be the vector that optimizes $R(H)$. We now consider the number of copies of $H$ on vertices with degrees proportional to $\boldsymbol{\beta}^*$, and will show that they cause $n^\gamma$ copies of $H$ with high probability. To be more precise, for fixed $0<\varepsilon<1$, we define
\begin{equation}
    M^{(\boldsymbol{\beta})}(\varepsilon) := \left\{ (v_1,...,v_k) : w_{v_i} \in [1, 1/\varepsilon]n^{\beta_i}, \forall i \in [k] \right\},
\end{equation}
for any given $\boldsymbol{\beta} = (\beta_1,...,\beta_k)$. Let $N(H,M^{(\boldsymbol{\beta})}(\varepsilon))$ be the conditional expectation of the number of subgraphs $H$ appearing on vertex $k$-tuples in $M^{(\boldsymbol{\beta})}(\varepsilon)$, conditionally on the weights. Then,
\begin{align}
    N(H,M^{(\boldsymbol{\beta})}(\varepsilon)) &= \sum_{\bv \in M^{(\boldsymbol{\beta})}(\varepsilon)} \prod_{(i,j) \in E_H}\min\Big(\frac{w_{v_i}w_{v_j}}{n},1\Big).
\end{align}
For $\boldsymbol{\beta}^*$, this yields
\begin{align}\label{eq:MnNnsize}
    N(H,M^{(\boldsymbol{\beta}^*)}(\varepsilon)) &\geq  | M^{(\boldsymbol{\beta}^*)}(\varepsilon)| \prod_{(i,j) \in E_H}n^{\min(\beta_i^*+\beta_j^*-1,0)  }.
\end{align}

 The number of vertices of weight within $[1,1/\varepsilon]n^\beta$ is a Binomial random variable with mean $n n^{\beta\alpha}(1-\varepsilon^{(\alpha)})$. When $\beta\leq 1/\alpha$, then this mean is constant or tends to infinity when $n$ grows, so that the number of vertices of weight within $[1,1/\varepsilon]n^\beta$ is $\Theta_p(n^{1 - \beta\alpha})$.

Let $\mathcal{T} = \{i\in[k]: \beta_i^*>1/\alpha\}$. We then define the event 
\begin{equation}
    \mathcal{E} = \{ \exists v_1,\dots, v_{|\mathcal{T}|}\in [n]: w_{v_i}\geq \varepsilon n^{\beta_i^*} \quad \forall i\in [|\mathcal{T}|] \},
\end{equation}
which implies that there is as least one set of $|\mathcal{T}|$ vertices with extra large degrees.
Conditionally on $\mathcal{E}$, 
\begin{align}
    | M^{(\boldsymbol{\beta}^*)}(\varepsilon)| & \geq \prod_{i:\beta_i^* \leq 1/\alpha}\Theta_p(n^{1 - \alpha\beta_i^*})\nonumber \\
    & =  \Theta_p\Big( n^{\sum_{i\in[k]}\max(1 - \alpha\beta_i^*,0)} \Big).
\end{align}
Combining this with~\eqref{eq:MnNnsize} yields that on $\mathcal{E}$, with high probability, for some $c_2>0$
\begin{align}
    N(H,M^{(\boldsymbol{\beta}^*)}(\varepsilon)) &\geq c_2  n^{\sum_{i\in[k]}\max(1 - \alpha\beta_i^*,0)+\sum_{\{i,j\}\in E_H}\min(\beta_i^*+\beta_j^*-1,0)}\geq c_2 n^{\gamma},
\end{align}
where the last inequality follows from the constraint in the optimization problem for $R(H)$ in~\eqref{eq:opt_poly_constr2}.

We now compute the probability of the event $\mathcal{E}$.
 When $\beta>1/\alpha,$ 
 \begin{equation}
     \Prob{L_n(\varepsilon) n^{\beta}=k}\geq c_\varepsilon (n^{1+\alpha\beta})^k,
 \end{equation}
 for some $c_\varepsilon>0$. Thus, for some $\tilde{c}>0$,
 \begin{equation}
     \Prob{\mathcal{E}} \geq \tilde{c}\prod_{i:\beta_i^*>1/\alpha}n^{1-\beta_i^*\alpha} = \tilde{c}n^{\sum_{i\in[k]}\min(1-\beta_i^*\alpha,0)} = \tilde{c}n^{R(H)}.
 \end{equation}
 Therefore
 \begin{align}
     \lim_{n\to\infty}\frac{\log(\Prob{C^{(n)}(H)>n^\gamma})}{\log(n)} & \geq 
      \lim_{n\to\infty}\frac{\log(\Prob{N(H,M^{(\boldsymbol{\beta}^*)}(\varepsilon))>n^\gamma})}{\log(n)} \nonumber\\
      & \geq \lim_{n\to\infty}\frac{\log(\Prob{\mathcal{E}})}{\log(n)} = R(H).
 \end{align}

\textbf{Upper bound.}
We cover the interval $[1,n]$ with intervals of the type $I_\varepsilon(n^\zeta) = [\varepsilon,1/\varepsilon]n^\zeta$ with $\zeta\in[0,1]$. This is the same as covering $[0,\log(n)]$ with intervals of the type $[\zeta\log(n)+\log(\varepsilon),\zeta\log(n)-\log(\varepsilon)]$, which have length $-2\log(\varepsilon)$. Thus, to cover the interval $[1,n]$, we need $-2\log(n)/\log(\varepsilon)$ such logarithmic intervals. Therefore, $[0,1]^k$ can be covered with a covering $\mathcal{C}$ of size $(-2\log(n)/\log(\varepsilon))^k = \Theta(\log(n)^k)$. Then,
\begin{align}\label{eq:upperbound_split}
    \Prob{C^{(n)}(H)>n^\gamma} & = \Prob{\sum_{\boldsymbol{\beta}\in\mathcal{C}}N(H,M^{(\boldsymbol{\beta})}(\varepsilon))>n^\gamma}\nonumber\\
    & \leq \Prob{\exists \boldsymbol{\beta}\in\mathcal{C}: N(H,M^{(\boldsymbol{\beta})}(\varepsilon))>n^\gamma/\log(n)^k}\nonumber\\
    & \leq \sum_{\boldsymbol{\beta}\in\mathcal{C}}\Prob{N(H,M^{(\boldsymbol{\beta})}(\varepsilon))>n^\gamma/\log(n)^k},
\end{align}
where the last step follows from the union bound.
For $\beta<1/\alpha$, the number of vertices with weight within $[\varepsilon,1/\varepsilon]n^{\beta}$ is a Binomial random variable with diverging mean. Therefore, there exist $C_\varepsilon,c_\varepsilon>0$ such that
\begin{equation}\label{eq:binconcentration}
    \Prob{\sum_{i\in[n]}\mathbbm{1}_{w_i\in [\varepsilon,1/\varepsilon]n^{\beta}}\geq C_\varepsilon n^{1-\beta\alpha}}\leq \exp(-n^{c_\varepsilon}),
\end{equation}
$\forall \boldsymbol{\beta} \in\mathcal{C}$.
Let 
\begin{equation}
	\mathcal{E}_2 = \big\{\sum_{i\in[n]}\mathbbm{1}_{w_i\in [\varepsilon,1/\varepsilon]n^{\beta}}\leq C_\varepsilon n^{1-\beta\alpha} \quad \forall \boldsymbol{\beta} \in\mathcal{C}\big\}.
	\end{equation} 
Then,
\begin{align*}
    &\Prob{C^{(n)}(H)>n^\gamma} \\ 
    & \quad \leq \sum_{\boldsymbol{\beta}\in\mathcal{C}}\Prob{|M^{(\boldsymbol{\beta})}(\varepsilon)|>\tilde{c}n^{\gamma-\sum_{\{i,j\}\in E_H}\min(\beta_i+\beta_j-1,0)}/\log(n)^k \mid \mathcal{E}_2} +\Prob{\mathcal{E}_2}\nonumber\\
    & \quad \leq \sum_{\boldsymbol{\beta}\in\mathcal{C}}\Prob{\prod_{i:\beta_i>1/\alpha}|M^{(\beta_i)}(\varepsilon)|>\tilde{c}n^{\gamma-\sum_{\{i,j\}\in E_H}\min(\beta_i+\beta_j-1,0)-\sum_i(1-\alpha\beta_i)}/\log(n)^k } \nonumber\\
    & \qquad +\exp(-n^c).
\end{align*}
Now, when $\gamma-\sum_{\{i,j\}\in E_H}\min(\beta_i+\beta_j-1,0)-\sum_i(1-\alpha\beta_i)>0$, then this calculates the probability of a polynomially growing number of vertices of degree more than $n^{1/\alpha}$, which happens with exponentially small probability. On the other hand, when $\gamma-\sum_{\{i,j\}\in E_H}\min(\beta_i+\beta_j-1,0)-\sum_i(1-\alpha\beta_i)\leq0$, then only a constant $|M^{(\beta_i)}(\varepsilon)|$ is required, which can be upper bounded by the probability of $M^{(\beta_i)}(\varepsilon)$ being non-empty.

As long as $\gamma-\sum_{\{i,j\}\in E_H}\min(\beta_i+\beta_j-1,0)-\sum_i(1-\alpha\beta_i)\geq0$,
\begin{equation}
    \Prob{M^{(\boldsymbol{\beta})}(\varepsilon) \neq \emptyset}\leq \prod_{i:\beta_i>1/\alpha}(n^{1-\beta_i\alpha})\leq n^{R(H)},
\end{equation}
where the last inequality follows from~\eqref{eq:opt_poly_constr2}. Now the requirement that $N(H,M^{(\boldsymbol{\beta})}(\varepsilon))>n^\gamma/\log(n)^k$ rather than $n^\gamma$ allows for vertices of weight $n^{\beta_i}/\log(n)$ rather than $n^{\beta_i}$, which adjusts the final probability to $n^{R(H)}\log(n)^{k\alpha}$. Combining this with~\eqref{eq:upperbound_split} yields
\begin{equation}
    \Prob{C^{(n)}(H)>n^\gamma}  \leq C n^{-R(H)}\log(n)^{2k\alpha},
\end{equation}
finishing the proof.
\end{proof}

\hspace{5cm}
\paragraph{Funding and competing interests}
Not applicable
\hspace{5cm}

\bibliographystyle{plain}
\bibliography{sn-bibliography}

\end{document}